\renewcommand{\Im}{\operatorname{Im}}
\newcommand{\Res}{\operatorname{Res}}
\renewcommand{\Re}{\operatorname{Re}}
\renewcommand{\Im}{\operatorname{Im}}
\newcommand{\s}{{\sigma}}
\renewcommand{\a}{\alpha}
\renewcommand{\b}{\beta}
\newcommand{\e}{\epsilon}
\renewcommand{\d}{{\delta}}
\newcommand{\g}{\gamma}
\newcommand{\G}{\Gamma}
\renewcommand{\o}{\over}
\newcommand{\bs}{\boldsymbol}
\renewcommand{\(}{\left\(}
\renewcommand{\)}{\right\)}
\renewcommand{\[}{\left\[}
\renewcommand{\]}{\right\]}
\renewcommand{\i}{\infty}
\numberwithin{equation}{section}
\theoremstyle{plain}
\newtheorem{theorem}{Theorem}[section]
\newtheorem{lemma}[theorem]{Lemma}
\newtheorem*{remark*}{Remark}
\newtheorem{corollary}[theorem]{Corollary}
\def\proof{\@ifnextchar[{\@oproof}{\@nproof}}
\def\@oproof[#1][#2]{\trivlist\item[\hskip\labelsep\textit{#2 Proof of\
		#1.}~]\ignorespaces}
\def\@nproof{\trivlist\item[\hskip\labelsep\textit{Proof.}~]\ignorespaces}
\begin{document}
	\title[Riesz type criteria for $L$-functions in the Selberg class]{Riesz type criteria for $L$-functions in the Selberg class} 
	
	\author{Shivajee Gupta and Akshaa Vatwani}\thanks{2020 \textit{Mathematics Subject Classification.} Primary 11M06, Secondary 11M26, 11M41, 33C20.\\
		\textit{Keywords and phrases.}  Selberg class, Grand Riemann Hypothesis, Riesz-type criteria, modular relations.}
	
	\address{Discipline of Mathematics, Indian Institute of Technology Gandhinagar, Palaj, Gandhinagar 382355, Gujarat, India} 
	\email{shivajee.o@iitgn.ac.in, akshaa.vatwani@iitgn.ac.in \newline }
	\begin{abstract}
	We formulate a generalization of Riesz-type criteria in the setting of  $L$-functions belonging to the Selberg  class.   We obtain a  criterion which is sufficient for the Grand Riemann Hypothesis (GRH) for $L$-functions satisfying axioms of the Selberg class without imposing the Ramanujan hypothesis on their coefficients. We also construct a subclass of the Selberg class and  prove a necessary criterion for GRH for $L$-functions in this subclass.  Identities of Ramanujan-Hardy-Littlewood type  are also established   in this setting, specific cases of which  yield new transformation formulas involving special values of the Meijer $G$-function of the type $G^{n \  0}_{0 \ n}$. 
	\end{abstract}

	\maketitle

\section{Introduction}
The well-known Riemann Hypothesis asserts that all non-trivial zeros of the Riemann zeta function $\zeta(s)$ lie on the line $\Re(s) =1/2$. 
In 1916, Riesz  \cite{riesz1916} showed  that a necessary and sufficient criterion for the Riemann Hypothesis is  the bound 
\begin{align} \label{riesz criteria}
\sum_{n=1}^\infty \mu(n)  \frac{x}{n^2} e^{-x/n^2} =O_\d \big(  x^{\frac{1}{4}+\d} \big),  
\end{align}
for any $\d>0$. Around  the same time, Hardy and Littlewood \cite{hl} established that the bound 
\begin{align} \label{hl criteria}
\displaystyle\sum\limits_{n=1}^{\infty}\frac{\mu(n)}{n}e^{-x/n^2} = O_{\delta}\big(x^{-\frac{1}{4}+\delta}\big)
\end{align}
for any $\d>0$ is equivalent to the Riemann Hypothesis.  Various variants of Riesz-type criteria have been given in the literature, for instance for Dirichlet $L$-functions  and in the setting of primitive Hecke forms  by Dixit, Roy and Zaharescu in  \cite{riesz} and \cite{hecke} respectively; by Dixit, Gupta and Vatwani \cite{dgv} for the Dedekind zeta function; and by Banerjee and Kumar \cite{banerjee-kumar} for $L$-functions associated to primitive Maass cusp forms over the congruence subgroup $\Gamma_0(N)$. In \cite{krr}, K\"{u}hn, Robles and Roy obtained  a generalized  Riesz-type  criterion involving functions which are reciprocal under a certain Hankel transformation. Their result holds for $L$-functions in the reduced Selberg class of degree one.   Recently, Agarwal, Maji and Garg \cite{agm} obtained a generalization of \eqref{riesz criteria} and \eqref{hl criteria} by giving criteria involving the sum 
$
\sum\limits_{n=1}^{\infty}\frac{\mu(n)}{n^k}e^{-x/n^2}, 
$
where $k\ge 1$ is a real number.  In forthcoming work, they also obtain analogues of this for Dirichlet $L$-functions.  

 In this manuscript, we obtain Riesz-type criteria for the Grand Riemann Hypothesis (GRH) for a broad class of $L$-functions. More precisely, we  obtain criteria which are \textit{sufficient} for GRH and applicable  to functions  $F(s)$ satisfying  axioms of the Selberg class $\mathcal S$ without  imposing  the  Ramanujan hypothesis for  the coefficients of $F(s)$.  On the other hand, we also obtain \textit{necessary} criteria for GRH which apply to functions in a subclass of the Selberg class, which we denote as $\mathcal S^*$.  Essentially, elements of $\mathcal{S^*}$ have a polynomial Euler product and a functional equation involving Gamma functions evaluated on shifts of rational multiples of $s$.  It is expected that this subclass does not exclude any important examples. In fact,  $\mathcal S^*$ is conjectured to be  $\mathcal{S}$.  
 This result is stated as Theorem \ref{Rhlselberg} below. 
 
Some key features of our result are as follows. The number of $\Gamma$-factors of the type $\Gamma(\a s+\b)$ $(\a>0)$ in the functional equation of $F(s)$, for which $\b=0$,  plays an important role in our  results.  This parameter is denoted $j_F$ later in the paper and statements of our results are different depending on whether $j_F$ is zero or non-zero.  Another feature is that we incorporate the degree of the $L$-function  in a new way in order to generalize the  kernel of \eqref{hl criteria}.  It is clear that the natural analogue of $\mu(n)$ is the coefficient appearing in the Dirichlet series of  $1/F(s)$, denoted by $b_F(n)$. However, the kernel $e^{-x/n^2} $ of \eqref{hl criteria} should be thought of as the function $\exp(-y^2)$ evaluated on $\sqrt{x}/ n$. We generalize this to a  function $Z_{\bs \a, \bs \b}\big( (\sqrt{x}/n)^{d_F} \big)$ (see \eqref{defZ} and \eqref{defP}), where $d_F$ is the degree of the $L$-function  $F(s)$ and $\bs \a, \bs \b$ are parameters explicitly arising from the functional equation of $F(s)$. The presence of the degree $d_F$ in the exponent of the argument is a key factor which was not obvious from existing results and plays a crucial role in our theorems.  The function $Z_{\bs \a, \bs \b}(x)$ (see \eqref{defZ}) can essentially (up to a possible residue term)  be viewed  as   an inverse Mellin transform of the product of  Gamma factors arising in the functional equation of $F(s)$. Indeed, this is a natural and apt  generalization if we interpret the function  $\exp({-y^2})$ appearing in \eqref{hl criteria} as the inverse Mellin transform of  the factor $\Gamma(s/2)$ which arises in the functional equation of $\zeta(s)$.  Our methods rely on obtaining  non-trivial estimates (up to a residue term) for  $Z_{\bs \a, \bs \b}(x)$ as well as its derivative  by exploiting  the close connection between  $Z_{\bs \a, \bs \b}(x)$  and the Meijer $G$-function. These are given in Lemmas \ref{lem:Z tilda bd} and \ref{lem:Z tilda dash bd}.

 
  Moreover, along the lines of \cite{riesz},   we introduce an additional parameter $z$ by inserting a $\cosh$ term. This allows for greater flexibility,  enabling us to obtain a sufficient criterion for the case when all but  finitely many zeros of $F(s)$  lie on the critical line. This result (Theorem \ref{finite zero}) is valid for any element $F(s)$ satisfying assumptions of the Selberg class without necessarily satisfying the Ramanujan hypothesis. 
  Our results which  do not require the Ramanujan  hypothesis on the coefficients of the $L$-function  are thus valid for a larger class of $L$-functions, for instance Artin $L$-functions and automorphic $L$-functions (associated with automorphic representations of $\textup{GL}(n)$ over number fields). 
 
The intuition behind  such equivalent criteria for the Riemann Hypothesis lies in  what are known as identities of Ramanujan-Hardy-Littlewood type.  In particular, the criterion \eqref{hl criteria} was motivated via a striking modular transformation  obtained by Ramanujan, involving  infinite series of the M\"obius function \cite{Ramanujan nb} (see also \cite[p.~468, Entry 37]{berndt1}). This identity was later corrected by  Hardy and Littlewood \cite[p.~156, Section 2.5]{hl} to give the  following. 
\begin{theorem}[Ramanujan-Hardy-Littlewood]
Let $\alpha$ and $\beta$ be two positive numbers such that $\alpha\beta=\pi$. Assume that the series $\sum_{\rho}\left(\Gamma{\left(\frac{1-\rho}{2}\right)}/\zeta^{'}(\rho)\right)x^{\rho}$ converges for every positive real $x$, where $\rho$ runs through the non-trivial zeros of $\zeta(s)$, and that the non-trivial zeros of $\zeta(s)$ are simple. Then
\begin{align}\label{mrhl}
	&\sqrt{\alpha}\sum_{n=1}^{\infty}\frac{\mu(n)}{n}e^{-\alpha^2/n^2}-\sqrt{\beta}\sum_{n=1}^{\infty}\frac{\mu(n)}{n}e^{-\beta^2/n^2}=-\frac{1}{2\sqrt{\beta}}\sum_{\rho}\frac{\Gamma{\left(\frac{1-\rho}{2}\right)}}{\zeta^{'}(\rho)}\beta^{\rho}.
\end{align}
\end{theorem}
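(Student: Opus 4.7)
The plan is to prove this identity by Mellin inversion coupled with the functional equation of $\zeta(s)$ and a residue computation. Set $\phi(\alpha):=\sum_{n=1}^{\infty}\frac{\mu(n)}{n}e^{-\alpha^2/n^2}$. Since $\int_0^\infty e^{-\alpha^2/n^2}\alpha^{s-1}\,d\alpha = \tfrac{1}{2}n^{s}\Gamma(s/2)$, term-by-term integration (valid by absolute convergence for $\Re(s)<0$) gives $\int_0^\infty \phi(\alpha)\alpha^{s-1}\,d\alpha = \Gamma(s/2)/(2\zeta(1-s))$. Mellin inversion then yields, for some $c<0$,
\begin{equation*}
\phi(\alpha) \;=\; \frac{1}{2\pi i}\int_{(c)}\frac{\Gamma(s/2)}{2\,\zeta(1-s)}\,\alpha^{-s}\,ds.
\end{equation*}

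Next, I would shift the contour to a line $\Re(s)=c''\in(1,3)$. In the enclosed strip the integrand is meromorphic: the would-be pole of $\Gamma(s/2)$ at $s=0$ is cancelled by the simple zero of $1/\zeta(1-s)$ there (coming from the simple pole of $\zeta$ at $s=1$), so contributes nothing, while each non-trivial zero $\rho$ (assumed simple) produces a simple pole at $s=1-\rho$ with residue $-\Gamma((1-\rho)/2)\alpha^{\rho-1}/(2\zeta'(\rho))$. I would then apply the functional equation $\Gamma(s/2)/\zeta(1-s)=\pi^{s-1/2}\Gamma((1-s)/2)/\zeta(s)$ to the integral along $(c'')$, expand $1/\zeta(s)=\sum_{n=1}^{\infty}\mu(n)n^{-s}$, interchange summation and integration, and evaluate each individual integral by shifting past the simple pole of $\Gamma((1-s)/2)$ at $s=1$. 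The remaining integral, via the substitution $u=(1-s)/2$ and the inverse Mellin identity $e^{-x}=\frac{1}{2\pi i}\int_{(\sigma)}\Gamma(u)x^{-u}\,du$, evaluates in closed form, giving
\begin{equation*}
\frac{1}{2\pi i}\int_{(c'')}\Gamma((1-s)/2)\,(n\alpha/\pi)^{-s}\,ds \;=\; \frac{2\pi}{n\alpha}\bigl(e^{-\pi^2/(n\alpha)^2}-1\bigr).
\end{equation*}

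Summing the result against $\mu(n)/n$, the constant piece contributes $\sum_{n=1}^{\infty}\mu(n)/n=0$ (a classical identity equivalent to the prime number theorem), and with $\alpha\beta=\pi$ the exponential piece assembles into $(\sqrt{\beta}/\sqrt{\alpha})\phi(\beta)$. Multiplying through by $\sqrt{\alpha}$ therefore yields
\begin{equation*}
\sqrt{\alpha}\,\phi(\alpha) - \sqrt{\beta}\,\phi(\beta) \;=\; \frac{1}{2\sqrt{\alpha}}\sum_{\rho}\frac{\Gamma((1-\rho)/2)}{\zeta'(\rho)}\,\alpha^{\rho}.
\end{equation*}
To recast this in the form stated in the theorem, I would re-index $\rho\mapsto 1-\rho$ (valid by the symmetry of the zero set) and invoke the identity $\zeta'(1-\rho) = -\pi^{1/2-\rho}\Gamma(\rho/2)\zeta'(\rho)/\Gamma((1-\rho)/2)$, obtained by differentiating the functional equation and evaluating at a zero. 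Substituting $\alpha=\pi/\beta$ converts the right-hand side into $-(2\sqrt{\beta})^{-1}\sum_{\rho}\Gamma((1-\rho)/2)\beta^{\rho}/\zeta'(\rho)$, which is the stated identity.

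The main obstacle is analytic: the contour shift across the critical strip and the interchange of sum with integral require Stirling asymptotics for $\Gamma$, classical bounds for $1/\zeta(\sigma+it)$ on vertical lines, and adequate control on $|\zeta'(\rho)|$ summed over non-trivial zeros. The hypotheses of the theorem — simplicity of the zeros together with convergence of $\sum_\rho \Gamma((1-\rho)/2)x^\rho/\zeta'(\rho)$ for every positive $x$ — are precisely those needed to close these estimates and to make sense of the series of residues as an absolutely convergent contribution.
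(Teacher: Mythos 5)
Your argument is essentially the paper's own: Theorem \ref{rhlselbergth} is proved by exactly this contour--shift--plus--functional--equation computation (residues at the points corresponding to the non-trivial zeros, the shifted integral re-identified with the dual sum via $1/F(s)=\sum b_F(n)n^{-s}$ and the inverse Mellin representation of the Gamma factor), and the classical identity is then read off as the case $F=\zeta$, where $Z_{(1/2),(0)}(x)=2(e^{-x^2}-1)$. Your residue computation at $s=1-\rho$, the closed-form evaluation $\frac{1}{2\pi i}\int_{(c'')}\Gamma((1-s)/2)(n\alpha/\pi)^{-s}\,ds=\frac{2\pi}{n\alpha}\bigl(e^{-\pi^2/(n\alpha)^2}-1\bigr)$, and the final re-indexing $\rho\mapsto 1-\rho$ all check out.

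One step is misjustified as written: the individual integrals $\int_0^\infty e^{-\alpha^2/n^2}\alpha^{s-1}\,d\alpha$ converge only for $\Re(s)>0$, where $\sum_n\mu(n)n^{s-1}$ does not represent $1/\zeta(1-s)$, so ``term-by-term integration by absolute convergence for $\Re(s)<0$'' does not parse --- the two requirements are incompatible. The standard repair, which is precisely what the paper's kernel $2(e^{-x^2}-1)$ encodes, is to replace $e^{-\alpha^2/n^2}$ by $e^{-\alpha^2/n^2}-1$ (harmless since $\sum_n\mu(n)/n=0$), whose Mellin transform is $\tfrac{1}{2}n^{s}\Gamma(s/2)$ on the strip $-2<\Re(s)<0$; equivalently, insert the inverse-Mellin representation of $e^{-x}-1$ taken on a line $\Re(w)\in(-1,0)$ directly into the sum and interchange there. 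With that fix, and with the horizontal-segment estimates you flag but do not carry out (the paper's Lemma \ref{lemma} supplies the required lower bound $|F(\sigma+iT)|\ge e^{-A_2T}$ along a suitable sequence of $T$, which against Stirling kills the horizontal integrals), your proof closes.
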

Such identities can be viewed as encoding relations between arithmetical and analytic pieces of information about the relevant $L$-function. In this case, the left hand side contains arithmetical information about $\zeta(s)$ due to the M\"obius function, whereas the right hand side pertains to analytic information involving  the non-trivial zeros of $\zeta(s)$. 

Showing the convergence of the series on the right-hand side is non-trivial. While it is widely believed that the series is rapidly convergent, what is currently known is that it is convergent under a certain  bracketing of the terms. More precisely, this bracketing means that  for some positive constant $c$,   the terms for which 
\begin{equation} \label{bracketing}
	|\text{Im}\;\rho-\text{Im}\;\rho'|<\exp\left(-c\;\text{Im}\;\rho/\log (\text{Im}\;\rho)\right)+\exp\left(-c\;\text{Im}\;\rho'/\log (\text{Im}\;\rho')\right),
\end{equation}
are included in the same bracket (see \cite[p.~220]{titch}).  It also turns out that convergence under such bracketing is enough to prove \eqref{mrhl} \cite[p.~158]{hl}, \cite[p.~220]{titch}. 

There exist several generalizations of \eqref{mrhl} in various settings.   
 In \cite{hecke}, Dixit, Roy and Zaharescu  obtained such an analogue for Hecke forms.  In \cite{rzz}, Roy, Zaharescu, and Zaki obtained a result of the type  \eqref{mrhl} where the M\"{o}bius function is replaced by a convolution of Dirichlet characters with the M\"{o}bius function. Further results this kind have also been obtained by Dixit \cite{charram, dixthet}, K\"{u}hn, Robles and Roy \cite{krr}, Dixit, Roy and Zaharescu \cite{riesz},  Agarwal, Garg and Maji \cite{agm}, Dixit, Gupta and Vatwani  \cite{dgv}, etc.  
In this paper, we prove a very general such identity for $L$-functions satisfying axioms of the Selberg class other than the Ramanujan hypothesis.   This result is stated as Theorem \ref{rhlselbergth} below. By taking specific examples of  $L$-functions in $\mathcal{S}$, we not only recover many existing such identities in the literature but also obtain elegant new transformations involving special values of  Meijer $G$-functions of the type $G^{n \ 0}_{0 \ n}$, where $n \in \mathbb N$.  Some such consequences are given in Corollaries \ref{coro1} and \ref{coro2}. 

 
 The paper is organised as follows. We first define the Selberg class of  $L$-functions and then proceed to state our main results  in Section 2.  In Section 3, we give some preliminary lemmas which will be needed in our  proofs. In Section 4, we prove Theorem \ref{rhlselbergth}, which is our analogue of the Ramanujan-Hardy-Littlewood identity for the Selberg class. In Section 5, we give proofs of the transformations that arise as special cases of Theorem \ref{rhlselbergth}. Section 6 is devoted to proving Riesz-type criteria for the Grand Riemann Hypothesis for our general $L$-functions.

\section{Statements of results}\label{intro}

	In 1991, Selberg introduced a general class $\mathcal{S}$ of Dirichlet series satisfying certain axioms.
	A Dirichlet series $F(s)$ is defined to be in $\mathcal{S}$ if it satisfies the following conditions. 
	\begin{itemize}
		\item [(i)]  (Dirichlet series)  $F(s)$ can be expressed as a Dirichlet series
		\begin{align}\label{defF}
			F\left(s\right)\mathrm{=}\sum^{\mathrm{\infty }}_{n{=1}}{\frac{a_F\left(n\right)}{n^s}},
		\end{align}
		which is absolutely convergent in the region Re$ (s)> 1$, with $ a_F (1) = 1 $.
		\item[(ii)] (Analytic continuation) There exists a non-negative integer $ m $, 
		such that $(s-1)^m F(s)$ is an entire function of finite order.
		\item[(iii)] (Functional equation) Using the notation $\bar{\Phi}(s)=\overline{\Phi(\bar{s})}$,
		$F(s)$ satisfies a functional equation of the type 
		\begin{equation}\label{Functional equation}
			{\Phi}(s)=\omega \bar{\Phi}(1-s),
		\end{equation}
		where 
		\begin{equation}\label{Functional equation simplified}
			{\Phi}(s)=Q^s F({s}) \prod_{i=1}^q \Gamma\left(\alpha_i s+\beta_i \right) = \gamma_F(s)F(s) \:   (\text{say}), 
		\end{equation}
		where $\ Q>0, \ \alpha_i>0$,  $q\in \mathbb{N}$ and  $\beta_i$, $\omega$ are complex numbers with $\Re (\beta_i)\geq 0$
		and $|\omega|=1$.
		
		\item[(iv)] (Euler product) For $\Re(s)>1$, we have 
		\begin{align}
			F\left(s\right)={\prod_p }F_p\left(s\right),
		\end{align} 
		where 
		$F_p \left(s\right){=}\exp\left( \sum^{{\infty }}_{j{=1}}{\frac{g({p^j})}{p^{js}}}\right) $, 
		with 
		$g({p^j})\ll p^{j\theta}$ 
		for some $\theta < {1\over 2}$.

		\item[(v)] (Ramanujan hypothesis) For every $ \epsilon> 0$,
		\begin{equation}
			a_F\left(n\right)\ll_{\epsilon }n^{\epsilon }. 
		\end{equation}
	\end{itemize}
	
	Many well-known functions such as the Riemann zeta function, Dirichlet $L$-functions, Dedekind zeta functions associated to algebraic number fields etc are elements of the Selberg class.
	The function $\gamma _F(s)$ in axiom (iii) is called the $\gamma$-factor of $F$. This may not be unique, for instance by application of the duplication formula for any of the $\Gamma$-functions involved. However, it is known that the $\g$-factor is unique upto a constant (see Theorem 2.1, \cite{Conrey}). 
	The information in (iii) can be summarized by denoting $(Q,\boldsymbol{\alpha, \beta},w)$ as the data of $F$, where $\bs \a$ and $\bs \b$ are vectors given by  $\boldsymbol{\alpha}=(\alpha_1,\alpha_2,\cdots, \alpha_q)$ and $\boldsymbol{\beta}=(\beta_1,\beta_2,\cdots, \beta_q)$. We will also use  $\boldsymbol{\bar{\beta}}$ to denote $(\bar{\b}_1, \bar{\b}_2\cdots , \bar{\b}_q)$. 
	The data of $F$  thus is not uniquely determined in general. One defines the degree $d_F$ of $F\in \mathcal{S}$ as 
	\begin{align}
		d_F=2\sum_{i=1 }^{q}\alpha_i. 			
	\end{align}
	It is well-known that $d_F$ is an invariant, that is, the degree of $F$ is uniquely determined by the function $F(s)$.
	We note that due to the Euler product (axiom (iv)),  any $F\in \mathcal{S}$ does not vanish in the region $\Re(s)>1$ (for instance see Lemma 2.1 of \cite{Conrey}). Hence there is an arithmetic function $b_F(n)$ such that for $\Re(s)>1$, 
	\begin{align}
		{1\over F(s)}=\sum_{n=1}^{\infty} {b_F(n)\over n^s}.
	\end{align}
	
	In order to understand the analogue of the Riemann Hypothesis for $F\in \mathcal{S}$, some discussion of zeros of $F$ is necessary. 
	The zeros of $F$ in the region $\Re(s)<0$ are called trivial zeros. 
	Using functional equation, it is easy to see that the trivial zeros of $F(s)$ are located at the poles of the $\gamma$-factors $\gamma_F$. More precisely, the trivial zeros occur at 
	\begin{align}
		\left\{s={-m-\beta_i \over \alpha_i}:m\in \mathbb{N}\cup\{0\}\right\}_{i=1}^q
		\cap\{s:\Re(s)<0\}.
	\end{align}
	The other zeros of $F$ occur inside the critical strip $0\leq \Re(s)\leq1$. All such  zeros except at $s=0$ are called the non-trivial zeros of $F$. The case $s=0$ is more delicate. It is often possible for $F$ to have a zero at $s=0$.
	Let 
	\begin{align}\label{defk}
		k_F = \text{order of the pole of $F(s)$ at $s=1$}.
	\end{align} 
If we put $s=0$ into the functional equation of $F$, then the $\gamma$-factors on either side are entire except for poles coming from factors of the type $\Gamma(\a_is+\b_i)$ whenever $\b_i$ happens to be zero. Let $j_F$ be the number of components of $\boldsymbol{\b}=(\b_1,\cdots , \b_q)$ which are zero, that is 
	\begin{align}\label{defj}
		j_F=\#\{1\leq i\leq q:\b_i=0\}.
	\end{align}  
	In order to ensure that $j_F$ is well defined for a given $F\in \mathcal{S}$, we adopt the convention that $\g_F$ is of the form $\prod_{i=1}^q \Gamma\left(\alpha_i s+\beta_i \right)$, with $q$ being the least such integer. That is, if $\g'_F=\prod_{i=1}^{q'} \Gamma\left(\alpha'_i s+\beta'_i \right)$ is any other admissible $\g$-factor for $F$, then we have $q'\geq q.$  Thus, $F(s)$ has a zero of order $j_F-k_F$ at $s=0$. For instance, for the Dedekind zeta function $\zeta_{\mathbb{K}}(s)$, associated to a number field $\mathbb{K}$, one sees that $k_F=1$, while $j_F=r_1+r_2$, where $r_1$ and $2r_2$ are the number of real and imaginary embeddings of $\mathbb{K}$ into $\mathbb{C}$ respectively. Thus $\zeta_{\mathbb{K}}(s)$ has a zero at $s=0$, whenever $r_1+r_2-1>0$. The Grand Riemann Hypothesis (GRH) conjectures that all the non-trivial zeros of $F$ lie on the critical line $\Re(s)={1\over 2}$.  
	
	We now  set up some notation. The line integral $\int_{c-i\infty}^{c+i\infty}$ will be denoted as $\int_{(c)}$. We also define a parameter $c_F$ depending on the data of $F$ as follows: 
	\[ c_F = \left\lbrace 
	\begin{array}{ll}\label{defcF}
		\min \limits_{1\le i\le q}  \left\{  \frac{1}{2\a_i} \right\} 	 	
		&  \text{if } \Re(\beta_i)=0  \ \forall \ 1\leq i\leq q \\ 
		\min \limits_{1\le i\le q}  \left\{  \frac{\Re (\b_i)}{\a_i} \right\} 	 	& \text{otherwise}.
	\end{array} 
	\right. 
	\]	
	It is clear that there are no trivial zeros of $F$ to the right of $\Re(s)=-c_F$.
	Define for $-c_F   <c<0$  and $x>0$, 
	\begin{equation}\label{defZ}
		Z_{ \boldsymbol{\alpha,\beta} }(x) :=\frac{1}{2 \pi i} \int\limits_{(c)} {\prod_{i=1}^q\Gamma\left(\alpha_i s+\bar{\beta_i} \right){x^{ - s}} ds}, 
	\end{equation}	
	and  
	\begin{equation}\label{defP}
		\mathcal{P} _{\boldsymbol{\alpha,\beta} ,z}(y):=\sum\limits_{n = 1}^\infty  \frac{{b_F(n)}}{n}{Z_{\bs{ \a,\b}  }}\left(\left( {\frac{\sqrt{y} }{n}} \right)^{d_F}\right)\cosh\left({\sqrt{y}z}\over n\right), 
	\end{equation}
	for $y>0$ and $z\in \mathbb{C}$. The exponent $d_F$ introduced here in the definition of the kernel $\mathcal{P} _{\boldsymbol{\alpha,\beta} ,z}(y)$ will play a key role in what follows.
	
	Our first result is a modular relation for functions $F$ in the Selberg class. We will denote the derivative of $F(s)$ with respect to $s$  by $F'(s)$. 
	\begin{theorem}\label{rhlselbergth}
	Let $F$ satisfy axioms (i)-(iv) of the class $\mathcal{S}$. For every positive real $x$, assume the convergence of the series 
		\begin{equation}
			\sum\limits_{\rho} \frac{\prod_{i=1}^q\Gamma\left(\alpha_i(1- \bar{\rho})+\beta_i \right)}{\overline{F'({\rho})}}x^{\bar{\rho}},
		\end{equation} 
		where $\rho$ runs through the non-trivial zeros of $F(s)$. Suppose that each non-trivial zero of $F(s)$ is simple. Let $\eta , \nu >0$ be such that $\eta \nu ={1\over Q^2}$.  Let $r=k_F-j_F$, where $k_F, j_F$ are as defined in \eqref{defk}, \eqref{defj} respectively. Then for $r>0$, we have 
		\begin{align}
			\omega\sqrt{\eta}&	\sum_{n=1}^{\infty} {b_{F}(n)\over n}Z_{\boldsymbol{\alpha,\beta} }\left(\eta \over n\right)-{\sqrt{\nu} }\sum_{n=1}^{\infty} {\overline{b_{F}(n)}\over n}Z_{\boldsymbol{\alpha,\overline{\beta}}}\left(\nu \over n\right)\\
			&= -{1\over \sqrt{\nu} }\sum_{\rho} \frac{\prod_{i=1}^q\Gamma\left(\alpha_i(1-\bar{\rho}) +\beta_i \right)}{\overline{F'({\rho})}}\nu^{\bar{\rho}} 
			-	{1\over \sqrt{\nu}(r-1)!}{d^{r-1}\over ds^{r-1}}(s-1)^r   \frac{\prod_{i=1}^q\Gamma\left(\alpha_i(1-s)+\beta_i \right)}{\overline{F(\bar{s})}}\nu^{s}\bigg|_{s=1}\\
			& \quad -	{1\over \sqrt{\nu}(r-1)!}{d^{r-1}\over ds^{r-1}}s^r   \frac{\prod_{i=1}^q\Gamma\left(\alpha_i(1-s)+\beta_i \right)}{\overline{F(\bar{s})}}\nu^{s}\bigg|_{s=0} \label{rhlselbergeq}.
		\end{align} 
		If $r\leq 0$, we have the following more simplified relation:
		\begin{align}\label{rhlselbergeqr0}
			\omega\sqrt{\eta}	\sum_{n=1}^{\infty} {b_{F}(n)\over n}Z_{\boldsymbol{\alpha,\beta} }\left(\eta \over n\right)-{\sqrt{\nu} }\sum_{n=1}^{\infty} {\overline{b_{F}(n)}\over n}Z_{\boldsymbol{\alpha,\overline{\beta}}}\left(\nu \over n\right) = -{1\over \sqrt{\nu} }\sum_{\rho} \frac{\prod_{i=1}^q\Gamma\left(\alpha_i(1-\bar{\rho}) +\beta_i \right)}{\overline{F'({\rho})}}\nu^{\bar{\rho}} .
		\end{align}
	\end{theorem}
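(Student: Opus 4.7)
The strategy is to express both series in the identity as a single contour integral that can be shifted across the critical strip. Starting from the left-hand side, I substitute the Mellin-Barnes definition \eqref{defZ} of $Z_{\bs\a,\bs\b}$ and interchange the sum and integral; the inner Dirichlet series equals $1/F(1-s)$ on $\Re(s)<0$ (absolute convergence coming from the Euler product in axiom (iv), which bounds $\sum|b_F(n)|/n^\sigma$ for $\sigma>1$). This yields
\[
\omega\sqrt{\eta}\sum_{n=1}^\infty \frac{b_F(n)}{n}\,Z_{\bs\a,\bs\b}\left(\frac{\eta}{n}\right) = \frac{\omega\sqrt{\eta}}{2\pi i}\int_{(c)} \prod_{i=1}^q \Gamma(\a_i s + \bar{\b}_i)\,\frac{\eta^{-s}}{F(1-s)}\,ds
\]
for $-c_F<c<0$. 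Conjugating the functional equation \eqref{Functional equation} produces the identity $\prod_i\Gamma(\a_i s+\bar{\b}_i)/F(1-s) = Q^{1-2s}\prod_i\Gamma(\a_i(1-s)+\b_i)/(\omega\,\overline{F(\bar s)})$, and combined with $\sqrt{\eta}\,Q=1/\sqrt{\nu}$ (which follows from $\eta\nu=1/Q^2$) this transforms the above into
\[
\omega\sqrt{\eta}\sum_{n=1}^\infty \frac{b_F(n)}{n}\,Z_{\bs\a,\bs\b}\left(\frac{\eta}{n}\right) = \frac{1}{\sqrt{\nu}}\cdot\frac{1}{2\pi i}\int_{(c)} \frac{\prod_{i=1}^q\Gamma(\a_i(1-s)+\b_i)}{\overline{F(\bar s)}}\,\nu^s\,ds.
\]

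Next I would shift the contour from $\Re(s)=c$ to $\Re(s)=\lambda$ for some $\lambda$ with $1<\lambda<1+c_F$, keeping the new contour to the right of all non-trivial zeros of $F$ and the point $s=1$ but strictly to the left of the rightmost pole of $\prod_i\Gamma(\a_i(1-s)+\b_i)$. On the new contour the expansion $1/\overline{F(\bar s)}=\sum_n \overline{b_F(n)}/n^s$ converges absolutely; swapping sum and integral and making the substitution $u=1-s$, whose image contour $\Re(u)=1-\lambda\in(-c_F,0)$ lies in the valid window for $Z_{\bs\a,\overline{\bs\b}}$, I identify
\[
\frac{1}{\sqrt{\nu}}\cdot\frac{1}{2\pi i}\int_{(\lambda)} \frac{\prod_{i=1}^q\Gamma(\a_i(1-s)+\b_i)}{\overline{F(\bar s)}}\,\nu^s\,ds = \sqrt{\nu}\sum_{n=1}^\infty \frac{\overline{b_F(n)}}{n}\,Z_{\bs\a,\overline{\bs\b}}\left(\frac{\nu}{n}\right).
\]

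By the residue theorem, the difference between the two integrals on $(c)$ and $(\lambda)$ equals $-2\pi i$ times the sum of residues of the integrand $g(s):=\prod_i\Gamma(\a_i(1-s)+\b_i)\nu^s/\overline{F(\bar s)}$ at its poles in the strip $c<\Re(s)<\lambda$. These poles are of three types: (a) simple poles at $s=\bar\rho$ for each non-trivial zero $\rho$ of $F$, whose residues combine into the zero sum $\sum_\rho \prod_i\Gamma(\a_i(1-\bar\rho)+\b_i)\,\nu^{\bar\rho}/\overline{F'(\rho)}$; (b) a pole at $s=1$ whose order is controlled by the interaction of the order-$k_F$ pole of $\overline{F(\bar s)}$ with the order-$j_F$ pole of $\prod_i\Gamma(\a_i(1-s)+\b_i)$; and (c) a pole at $s=0$ of the same total order, arising from the order-$(j_F-k_F)$ zero of $F$ there together with the $\Gamma$-factors. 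Applying the standard higher-order residue formula $\tfrac{1}{(m-1)!}\tfrac{d^{m-1}}{ds^{m-1}}(s-s_0)^m g(s)|_{s_0}$ at $s_0=1$ and $s_0=0$ produces the two derivative terms in \eqref{rhlselbergeq}; when $r\le 0$ these contributions vanish and only the zero sum remains, yielding \eqref{rhlselbergeqr0}.

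The chief technical obstacles are threefold. First, justifying both interchanges of sum and integral absent the Ramanujan hypothesis: the Euler-product bound on $\sum|b_F(n)|/n^\sigma$ for $\sigma>1$, combined with Stirling's rapid decay of the $\Gamma$-factors on vertical lines, is the key tool. Second, justifying the contour shift requires polynomial-order growth bounds on $1/F(\bar s)$ in vertical strips (available from axiom (ii) via Phragm\'en--Lindel\"of) together with the superpolynomial decay of $\prod_i\Gamma(\a_i(1-s)+\b_i)$ as $|\Im s|\to\infty$, so that the horizontal connecting integrals tend to zero. Third, and most delicate, is the convergence of the sum over non-trivial zeros $\rho$: because zeros of $F$ may have small vertical spacing, this series is handled under the bracketing \eqref{bracketing} assumed in the hypothesis, following the classical argument for $\zeta(s)$.
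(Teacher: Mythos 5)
Your overall architecture matches the paper's proof exactly: write the first series as $\frac{1}{2\pi i}\int_{(c)}\prod_i\Gamma(\a_i s+\bar\b_i)\eta^{-s}F(1-s)^{-1}\,ds$, apply the functional equation, move the contour to a line $\Re(s)=d$ with $1<d<1+c_F$, identify the shifted integral with the second series via $s\mapsto 1-s$, and collect residues at the $\bar\rho$ and at $s=0,1$ (whose orders are governed by the interplay of $j_F$ and $k_F$). Your residue bookkeeping and the observation that the polar terms at $0$ and $1$ disappear when $r\le 0$ are both correct.

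There is, however, one genuine gap: your justification for discarding the horizontal segments of the contour. You claim polynomial-order bounds on $1/\overline{F(\bar s)}$ in the critical strip "via Phragm\'en--Lindel\"of from axiom (ii)." This cannot work: Phragm\'en--Lindel\"of gives upper bounds for $F$, not lower bounds, and $1/F$ is not even analytic in the strip --- it has poles at every non-trivial zero of $F$, and these zeros occur with density $\asymp\log T$ per unit height, so no polynomial bound on $1/F$ holds on arbitrary horizontal lines. The paper's proof devotes two lemmas to exactly this point: first an approximate formula $\frac{F'}{F}(s)=\sum_{|t-\gamma|\le1}\frac{1}{s-\rho}+O_F(\log t)$ uniformly in the strip, and then the deduction that if $T\to\infty$ through values with $|T-\gamma|>\exp(-A_1\gamma/\log\gamma)$ for every ordinate $\gamma$, one gets the \emph{exponential} lower bound $|F(\sigma+iT)|\ge e^{-A_2T}$ with $A_2<\frac{\pi d_F}{4}$. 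This is precisely calibrated so that the $e^{-\pi d_F|T|/4}$ decay of the Gamma product (Stirling plus the definition of $d_F$) still wins and the horizontal integrals vanish along that sequence of heights. Without this lemma (or an equivalent lower bound on $|F|$ along a carefully chosen sequence of $T$'s), your contour shift is not justified. A smaller inaccuracy: the theorem as stated assumes plain convergence of the series over zeros, not bracketed convergence, so your appeal to the bracketing \eqref{bracketing} is not what the hypothesis provides --- under the stated assumption the zero sum converges by fiat and no bracketing argument is needed.
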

	
	A straightforward consequence of Theorem 1.1 is that the classical result of Ramanujan, Hardy and Littlewood follows as a special case since for $F(s)=\zeta(s)$, we have $Z_{\boldsymbol{\a,\b}}(x) =Z_{(1/2),(0)}(x)$, which equals $2(e^{-x^2}-1)$. 
	 Moreover when $F$ is the Dedekind zeta function of a number field $K$, it is easy to check that 
	 \begin{align}
\boldsymbol{\a}=(\underbrace{\substack{{1\o 2},\cdots,{1\o 2}}}_\text{$r_1$ times},\underbrace{1,\cdots, 1}_\text{$r_2$ times}), \quad \boldsymbol{\b}=(0,\cdots, 0) \quad \text{and} \quad  Z_{\boldsymbol{\a,\b}}(x)=\frac{1}{2 \pi i} \int_{(c)} {{\Gamma ^{{r_1}}}\left( {\frac{s}{2}} \right){\Gamma ^{{r_2}}}\left( s \right){x^{ - s}}ds}, 
	 \end{align}
	  where $r_1$ and $2r_2$ are the number of real and complex embeddings of $K$ respectively. It thus follows that a special case of Theorem 1.1 yields Theorem 1.1 and Corollary 3.2 of \cite{dgv}.
	 
	 We now state more non-trivial consequences of Theorem 1.1 among which are certain elegant transformations involving special values of the Meijer $G$-function (defined in \eqref{meijer G}). In particular, special values of Meijer $G$-functions of the type $G^{n \ 0}_{0 \ n}$ for $n=1,2,3,\cdots$ come into play. Some interesting such special values involve the modified Bessel function of the second kind, which we proceed to define below. 
	 
	 The Bessel function of the first kind of order $\nu$ is defined by \cite[p.~40]{watson}
	 \begin{align}
	 	J_{\nu}(z)&:=\sum_{m=0}^{\infty}\frac{(-1)^m(z/2)^{2m+\nu}}{m!\Gamma(m+1+\nu)} \hspace{9mm} (z,\nu\in\mathbb{C}). 
	 \end{align}
	 The modified Bessel functions of the first and second kinds of order $\nu$ are defined by \cite[pp.~77-78]{watson}
	 \begin{align}
	 	I_{\nu}(z)&:=
	 	\begin{cases}
	 		e^{-\frac{1}{2}\pi\nu i}J_{\nu}(e^{\frac{1}{2}\pi i}z), & \text{if $-\pi<$ arg $z\leq\frac{\pi}{2}$,}\\
	 		e^{\frac{3}{2}\pi\nu i}J_{\nu}(e^{-\frac{3}{2}\pi i}z), & \text{if $\frac{\pi}{2}<$ arg $z\leq \pi$,}
	 	\end{cases}\\
	 	K_{\nu}(z)&:=\frac{\pi}{2}\frac{I_{-\nu}(z)-I_{\nu}(z)}{\sin\nu\pi}
	 \end{align}
	 respectively, with $K_n(z)$ defined by $\lim_{\nu\to n}K_{\nu}(z)$ if $n$ is an integer.
	 
	We state some special values of the Meijer G-function which are relevant in the context of Theorem \ref{rhlselbergth}.
	
		\begin{align}
		G^{4 \  0}_{0 \ 4}\left( \left. \begin{matrix}
			\rule{.4cm}{.2mm} \\ b \ b+{1\over 4} \  b+{2\over 4}\ b+{3\over 4}
		\end{matrix} \right| z \right)
	&={\sqrt{2}\pi^{3\over 2}}z^be^{-4 {z}^{1/4}},\label{sepG1}
		\\
		G^{4 \  0}_{0 \ 4}\left( \left. \begin{matrix}
			\rule{.4cm}{.2mm} \\ b \ b+{1\over 2} \  2b-c\ c
		\end{matrix} \right| z \right)
	&=8{\sqrt{\pi}}z^b K_{2c-2b}(2\sqrt{2} (-z)^{1/4} )K_{2c-2b}\left({2\sqrt{2}\sqrt{z}\o (-z)^{1/4}}\right),\label{sepG2}
	\end{align}  
	
	Using the identity \eqref{sepG1}, we derive the following transformation.

		\begin{corollary}\label{coro1}
		Let $F\in \mathcal{S}$ with data $(Q, \boldsymbol{\alpha,\beta}, \omega)$, where $\boldsymbol{\alpha}=(4)$ and $\boldsymbol{\beta}=(\b_1)$. For every positive real $x$, assume the convergence of the series 
		\begin{equation}
			\sum_{\rho} \frac{\Gamma\left(4(1-\bar{\rho}) + \bar \b_1 \right)}{\overline{F'({\rho})}}x^{\bar{\rho}},
		\end{equation} 
		where $\rho$ runs through the non-trivial zeros of $F(s)$. Suppose that each non-trivial zero of $F(s)$ is simple. Let $\eta , \nu >0$ be such that $\eta \nu ={1\over Q^2}$.  Let $r=k_F-1$, where $k_F$ is as defined in \eqref{defk}. Then for $\b_1=0$, we have
		\begin{align}
			{\omega\sqrt{\eta}\o 4}&	\sum_{n=1}^{\infty} {b_{F}(n)\over n}\left(e^{-\left( {\eta \over n}\right)^{1/4}  }-{1}\right)-{\sqrt{\nu} \o 4}\sum_{n=1}^{\infty} {\overline{b_{F}(n)}\over n}\left(e^{-\left( {\nu \over n}\right)^{1/4}  }-{1}\right)\\
			&= -{1\over \sqrt{\nu} }\sum_{\rho} \frac{\Gamma\left(4(1-\bar{\rho})  \right)}{\overline{F'({\rho})}}\nu^{\bar{\rho}} 
			-	{1\over \sqrt{\nu}(r-1)!}{d^{r-1}\over ds^{r-1}}(s-1)^r   \frac{\Gamma\left(4(1-s) \right)}{\overline{F(\bar{s})}}\nu^{s}\bigg|_{s=1}\\
			& \quad -	{1\over \sqrt{\nu}(r-1)!}{d^{r-1}\over ds^{r-1}}s^r   \frac{\Gamma\left(4(1-s)\right)}{\overline{F(\bar{s})}}\nu^{s}\bigg|_{s=0}\label{coro1eqn} .
		\end{align}
		When $\b_1\neq 0$, we have 
		\begin{align}
			{\omega\sqrt{\eta}\o 4}&	\sum_{n=1}^{\infty} {b_{F}(n)\over n}	\left(\eta \over n \right)^{\bar{\b}_1/ 4}e^{-\left( {\eta \over n}\right)^{1/4}  }-{\sqrt{\nu} \o 4}\sum_{n=1}^{\infty} {\overline{b_{F}(n)}\over n}	\left(\nu \over n \right)^{\bar{\b}_1/ 4}e^{-\left( {\nu \over n}\right)^{1/4}  }\\
			&= -{1\over \sqrt{\nu} }\sum_{\rho} \frac{\Gamma\left(4(1-\bar{\rho})+\bar{\beta_1}  \right)}{\overline{F'({\rho})}}\nu^{\bar{\rho}} 
			-	{1\over \sqrt{\nu}(r-1)!}{d^{r-1}\over ds^{r-1}}(s-1)^r   \frac{\Gamma\left(4(1-s) +\bar{\beta_1}\right)}{\overline{F(\bar{s})}}\nu^{s}\bigg|_{s=1}\\
			& \quad -	{1\over \sqrt{\nu}(r-1)!}{d^{r-1}\over ds^{r-1}}s^r   \frac{\Gamma\left(4(1-s)+\bar{\beta_1}\right)}{\overline{F(\bar{s})}}\nu^{s}\bigg|_{s=0}\label{coro2eqn} .
		\end{align}
	\end{corollary}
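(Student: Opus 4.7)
The plan is to derive Corollary \ref{coro1} as a direct specialization of Theorem \ref{rhlselbergth} to the single Gamma-factor case $q=1$, $\boldsymbol{\alpha} = (4)$, $\boldsymbol{\beta} = (\beta_1)$. The bulk of the work is an explicit evaluation of the kernel
\begin{align*}
Z_{\boldsymbol{\alpha, \beta}}(x) = \frac{1}{2\pi i}\int_{(c)} \Gamma(4s + \bar{\beta}_1)\, x^{-s}\, ds, \qquad -c_F < c < 0,
\end{align*}
in closed form, from which the left-hand sides of \eqref{coro1eqn} and \eqref{coro2eqn} will emerge by substitution into \eqref{rhlselbergeq}.

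To invoke the identity \eqref{sepG1}, I would first apply the Gauss multiplication formula
\begin{align*}
\Gamma(4z) = (2\pi)^{-3/2}\, 4^{4z - 1/2} \prod_{k=0}^{3} \Gamma\!\left(z + \tfrac{k}{4}\right)
\end{align*}
with $z = s + \bar{\beta}_1/4$, so that after pulling out the resulting scalar constants and absorbing the factor $4^{4s} = 256^s$ into the $x^{-s}$, the integrand becomes the Mellin--Barnes representation of $G^{4\,0}_{0\,4}$ with bottom parameters $b, b+1/4, b+1/2, b+3/4$ for $b = \bar{\beta}_1/4$, evaluated at the point $x/256$. Combining this with \eqref{sepG1} and simplifying using $256 = 4^4$ reduces, in the region where the contour lies to the right of every pole of the integrand, to
\begin{align*}
Z_{\boldsymbol{\alpha, \beta}}(x) = \frac{x^{\bar{\beta}_1/4}}{4}\, e^{-x^{1/4}}.
\end{align*}

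When $\beta_1 \neq 0$, the prescribed contour $-\Re(\beta_1)/4 < c < 0$ already lies to the right of every pole of $\Gamma(4s + \bar{\beta}_1)$, so the above expression applies verbatim and substituting into \eqref{rhlselbergeq} yields \eqref{coro2eqn}. When $\beta_1 = 0$, the contour $-1/8 < c < 0$ sits just to the left of the simple pole of $\Gamma(4s)$ at $s = 0$, whose residue (computed directly as $\lim_{s\to 0} s\,\Gamma(4s)\,x^{-s} = 1/4$) equals $1/4$; shifting the line of integration past this pole gives
\begin{align*}
Z_{\boldsymbol{\alpha, \beta}}(x) = \tfrac{1}{4}\!\left(e^{-x^{1/4}} - 1\right),
\end{align*}
and substitution into \eqref{rhlselbergeq} produces \eqref{coro1eqn}, with the shift in $r$ reflecting $j_F=1$ in this subcase. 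Note that $d_F = 8$ does not intervene explicitly, since the kernel enters \eqref{rhlselbergeq} through the argument $\eta/n$ rather than through the rescaled argument $(\sqrt{x}/n)^{d_F}$ used in $\mathcal{P}_{\boldsymbol{\alpha, \beta}, z}$.

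The only delicate step is the bookkeeping of the scalar constants introduced by the multiplication formula and the rescaling needed to align the integrand with the normalization in \eqref{sepG1}; the contour shift and residue evaluation in the $\beta_1 = 0$ case is routine. All remaining manipulations are direct substitutions into the statement of Theorem \ref{rhlselbergth}, with convergence of the zero-sum and analyticity of the residue pieces inherited from the hypotheses placed on $F$.
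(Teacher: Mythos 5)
Your proposal is correct and follows essentially the same route as the paper: the authors also evaluate $Z_{(4),(\beta_1)}$ by passing to $\tilde{Z}_{(4),(\beta_1)}$ via the residue relation \eqref{Z vs Z tilda}, rewriting it as $(2\pi)^{-3/2}\tfrac{4^{\bar{\beta}_1}}{2}\,G^{4\,0}_{0\,4}(\cdot\,|\,x/4^4)$ through the Gauss-multiplication computation packaged in \eqref{ztilde in G}, and then applying \eqref{sepG1} to obtain $\tfrac14 x^{\bar{\beta}_1/4}e^{-x^{1/4}}$ (with the extra $-\tfrac14$ from the pole at $s=0$ when $\beta_1=0$). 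Your constants and residue value agree with theirs, and doing the multiplication formula by hand rather than citing the lemma is only a cosmetic difference.
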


The identity \eqref{sepG2} yields the following transformation involving the modified Bessel function of the second kind.

	\begin{corollary}\label{coro2}
	Let $F\in \mathcal{S}$ with data $(Q,(1,1,1,1),(0,{1\o 2},-C,C),\omega)$, where $C$ is a non-zero real constant. For every positive real $x$, assume the convergence of the series 
	\begin{equation}
		\sum_{\rho} \frac{\Gamma(1-\bar{\rho})  \Gamma\left({3\o 2}-\bar{\rho} \right) \Gamma(1-C-\bar{\rho})  \Gamma(C+1-\bar{\rho})  }{\overline{F'({\rho})}}x^{\bar{\rho}},
	\end{equation} 
	where $\rho$ runs through the non-trivial zeros of $F(s)$. Suppose that each non-trivial zero of $F(s)$ is simple. Let $\eta , \nu >0$ be such that $\eta \nu ={1\over Q^2}$.  Let $r=k_F-1$, where $k_F$ is as defined in \eqref{defk}. Then we have
	\begin{align}
		\omega\sqrt{\eta}&	\sum_{n=1}^{\infty} {b_{F}(n)\over n}\left(8{\sqrt{\pi}} K_{2C}\left(2\sqrt{2}  \left( {-{\eta\o n}} \right)^{1/4} \right)K_{2C}\left({2\sqrt{2}\sqrt{{\eta\o n}}\o  \left( {-{\eta\o n}} \right)^{1/4}  }\right)+{\pi\sqrt{\pi}\o C\sin \pi C}\right)\\
		&\quad-{\sqrt{\nu} }\sum_{n=1}^{\infty} {\overline{b_{F}(n)}\over n}\left(8{\sqrt{\pi}} K_{2C}\left(2\sqrt{2} \left( {-{\nu\o n}} \right)^{1/4} \right)K_{2C}\left({2\sqrt{2}\sqrt{{\nu\o n}}\o \left( {-{\nu\o n}} \right)^{1/4}    }\right)+{\pi\sqrt{\pi}\o C\sin \pi C}\right)\\
		&= -{1\over \sqrt{\nu} }\sum_{\rho} \frac{\Gamma(1-\bar{\rho})  \Gamma\left({3\o 2}-\bar{\rho} \right) \Gamma(1-C-\bar{\rho})  \Gamma(C+1-\bar{\rho})  }{\overline{F'({\rho})}}\nu^{\bar{\rho}} \\
		&
		\quad-	{1\over \sqrt{\nu}(r-1)!}{d^{r-1}\over ds^{r-1}}(s-1)^r   \frac{\Gamma(1-s)  \Gamma\left({3\o 2}-s \right) \Gamma(1-C-s)  \Gamma(C+1-s)}{\overline{F(\bar{s})}}\nu^{s}\bigg|_{s=1}\\
		& \quad -	{1\over \sqrt{\nu}(r-1)!}{d^{r-1}\over ds^{r-1}}s^r   \frac{\Gamma(1-s)  \Gamma\left({3\o 2}-s \right) \Gamma(1-C-s)  \Gamma(C+1-s)}{\overline{F(\bar{s})}}\nu^{s}\bigg|_{s=0}\label{coro4eqn} .
	\end{align}
\end{corollary}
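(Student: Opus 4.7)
The plan is to derive this identity as a direct specialization of Theorem \ref{rhlselbergth} to the data $(Q,(1,1,1,1),(0,1/2,-C,C),\omega)$; the only non-trivial content is an explicit evaluation of the kernel $Z_{\bs \a,\bs \b}$. Since $C$ is real, $\overline{\bs\beta}=\bs\beta$, and exactly one coordinate of $\bs\beta$ vanishes, so $j_F=1$ and $r=k_F-j_F=k_F-1$, in agreement with the statement. The gamma product appearing in the non-trivial zero sum becomes
$$\prod_{i=1}^{4}\Gamma(\alpha_i(1-\bar\rho)+\beta_i) = \Gamma(1-\bar\rho)\,\Gamma\!\left({3\o 2}-\bar\rho\right)\Gamma(1-C-\bar\rho)\,\Gamma(1+C-\bar\rho),$$
so the convergence hypothesis in the corollary is exactly the one required by Theorem \ref{rhlselbergth}.

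The heart of the proof is therefore to evaluate
$$Z_{\bs \a,\bs \b}(x) = {1\o 2\pi i}\int_{(c)}\Gamma(s)\,\Gamma(s+1/2)\,\Gamma(s-C)\,\Gamma(s+C)\,x^{-s}\,ds, \qquad x>0,$$
along a line with $-c_F<c<0$. I would shift the contour rightward past $s=0$ to a line $(c')$ with $c'>0$ small enough to avoid any other pole of the integrand; Stirling's formula on the four $\Gamma$-factors makes the horizontal connecting segments negligible and justifies the shift. The only pole crossed is the simple pole of $\Gamma(s)$ at $s=0$, whose residue equals
$$\Gamma(1/2)\,\Gamma(-C)\,\Gamma(C) = -{\pi\sqrt{\pi}\o C\sin\pi C},$$
using $\Gamma(C)\Gamma(1-C)=\pi/\sin\pi C$ together with $\Gamma(1-C)=-C\,\Gamma(-C)$. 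Taking orientation into account, $Z_{\bs \a,\bs \b}(x)$ equals the shifted integral plus $\pi\sqrt{\pi}/(C\sin\pi C)$.

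The shifted integral is precisely the Meijer $G$-function $G^{4\ 0}_{0\ 4}$ with lower parameters $(0,\,1/2,\,-C,\,C)$, which fits the template of identity \eqref{sepG2} with $b=0$ and $c=C$ (so that $b+1/2=1/2$ and $2b-c=-C$). Applying \eqref{sepG2} then yields
$$Z_{\bs \a,\bs \b}(x) = 8\sqrt{\pi}\,K_{2C}\!\left(2\sqrt{2}\,(-x)^{1/4}\right)K_{2C}\!\left({2\sqrt{2}\sqrt{x}\o (-x)^{1/4}}\right)+{\pi\sqrt{\pi}\o C\sin\pi C}.$$
Substituting this expression with $x=\eta/n$ and $x=\nu/n$ into the two sums on the left-hand side of \eqref{rhlselbergeq}, while leaving the right-hand side unchanged, produces the claimed identity \eqref{coro4eqn}. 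I expect the main technical obstacle to be the sign bookkeeping for the residue at $s=0$ and the branch choice for $(-x)^{1/4}$ implicit in \eqref{sepG2}; once these are pinned down, what remains is a mechanical substitution into Theorem \ref{rhlselbergth}.
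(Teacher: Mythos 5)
Your proposal is correct and follows essentially the same route as the paper: the contour shift past the simple pole of $\Gamma(s)$ at $s=0$ is exactly the paper's relation \eqref{Z vs Z tilda} between $Z_{\bs\a,\bs\b}$ and $\tilde Z_{\bs\a,\bs\b}$, the residue $\Gamma(1/2)\Gamma(-C)\Gamma(C)=-\pi\sqrt{\pi}/(C\sin\pi C)$ is simplified by the same reflection-formula argument, and the identification of the shifted integral with $G^{4\ 0}_{0\ 4}$ followed by \eqref{sepG2} with $b=0$, $c=C$ matches the paper's steps \eqref{Z tilda to G} and \eqref{G to K}. The final substitution into Theorem \ref{rhlselbergth} is likewise identical.
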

Similarly, the identities \eqref{sepG01}  given below can be used to recover Corollaries 3.4 and 3.3 of \cite{dgv} respectively. 
\begin{align}
		G^{1 \  0}_{0 \ 1}\left( \left. \begin{matrix}
	\rule{.4cm}{.2mm} \\ b 
\end{matrix} \right| z \right)
=e^{-z}z^b, 
\qquad  \qquad
G^{2 \  0}_{0 \ 2}\left( \left. \begin{matrix}
	\rule{.4cm}{.2mm} \\ b \ b+{1\over 2}
\end{matrix} \right| z \right)
=\sqrt{\pi}z^be^{-2\sqrt{z}}. 
\label{sepG01}
\end{align}
 It is also possible to obtain more such transformations involving special values of $G^{n \ 0}_{0 \ n} $ for  other values of $n$, for instance using any of the following  identities: 
\begin{align}
		&G^{2 \  0}_{0 \ 2}\left( \left. \begin{matrix}
	\rule{.4cm}{.2mm} \\ b \ c 
\end{matrix} \right| z \right)
=2z^{{1\over 2}(b+c)}K_{b-c}(2\sqrt{z}), \label{sepG03} 
\qquad 
G^{3 \  0}_{0 \ 3}\left( \left. \begin{matrix}
	\rule{.4cm}{.2mm} \\ b \ b+{1\over 3} \  b+{2\over 3}
\end{matrix} \right| z \right)
={2\pi\over \sqrt{3}}z^be^{-3 {z}^{1/ 3}}. \label{sepG04}
\\
&G^{5 \  0}_{0 \ 5}\left( \left. \begin{matrix}
	\rule{.4cm}{.2mm} \\ b \ b+{1\over 5} \  b+{2\over 5}\ b+{3\over 5}\ b+{4\over 5}
\end{matrix} \right| z \right)
={4\pi^{ 2}\over\sqrt{5}}z^be^{-5 {z}^{1/ 5}}\label{sepG05}.
\end{align}
Since the proofs follow analogously to those of Corollaries  \ref{coro1} and \ref{coro2}, we do not delve into them here. 

	Our next main result gives Riesz-type criteria for the Grand Riemann Hypothesis, applicable to a general class of $L$-functions. Before stating this, we make some remarks about certain additional conditions that we need to impose on our $L$-function $F(s)$.
	
	The first such condition pertains to (iv) of the Selberg class axioms. From the perspective of known examples, in particular automorphic $L$-functions, it is natural to restrict $1\o F_p (s)$ to be a polynomial in $p^{-s}$ of degree independent of $p$ (see for instance \cite{Conrey}).  
	Axiom (iv) may thus be strengthened to axiom (iv)$'$ given below. We say that $F$ has polynomial Euler product if it satisfies the following 
	
	(iv)$'$ (Polynomial Euler product) For $\Re(s)>1$, we have 
	 \begin{align}
	 	F(s)=\prod_p\prod_{i=1}^{n}\left( 1-{\gamma_i(p)\o p^s}\right)^{-1} ,
	 \end{align}
	where $\gamma_i(p)$ are complex numbers.  It is conjectured that all $L$-functions in the Selberg class have a polynomial Euler product. 
		It is well known that under (iv)$'$, the axiom (v) is equivalent to the bound 
	\begin{align}\label{inequality}
		|\gamma_i(p)|\leq 1,
	\end{align}
	for all primes $p$ and $i=1,\cdots ,m$ (see for instance p. 347 of \cite{Christoph}).
	
	In all known examples of $L$-functions from $\mathcal{S}$, it is possible to find a normalization such that in axiom (iii), one has all $\a_i$'s to be rational numbers. In fact Conrey and Ghosh observed that one could take $\a_i={1\o 2}$ for all $i$ (cf. p.12 of \cite{Conrey}). We thus formulate a  subclass of functions called $\mathcal{S}^*$ 
	 satisfying axioms (i)-(iii),(iv)$'$,(v), with the additional restriction that in the data $(Q,\boldsymbol{\a,\b},w)$ of $F$, all the components of $\boldsymbol{\a}$ are rational numbers. 
	 
	 We do not appear to exclude any known examples of important $L$-functions under these new hypotheses. Indeed, it is conjectured that $\mathcal{S}^*=\mathcal{S}$. Our next result gives Riesz-type criteria for the Grand Riemann Hypothesis (GRH) in this setting.

	\begin{theorem}\label{Rhlselberg}
		Let $F$ satisfy axioms (i)-(iv) of the class $\mathcal{S}$. Let $j_F$ be as defined in \eqref{defj}. Then  the following hold. 
			\begin{itemize}
				\item [(a)] If $F\in \mathcal{S^*}$ and $j_F=0$, then the \textup{GRH} for $F(s)$ implies that $\mathcal{P} _{\boldsymbol{\alpha,\beta} ,z}(y)={O}_{F,\d, z}\left( y^{-\frac{1}{4}+\delta} \right)$ as $y\to \i$, for any $\d>0$.
				\item [(b)]  Let $\e>0$. If $F\in \mathcal{S^*}$ and $j_F\neq 0$, then the \textup{GRH} for $F(s)$ implies that 
				\begin{align}
					\mathcal{P} _{\boldsymbol{\alpha,\beta} ,0}(y)+	\sum_{n=1}^{[y ^{{1\over 2}-\epsilon}] -1}{b_F(n) \over n} {\Res\limits_{s=0} } \prod_{i=1}^q\Gamma\left(\alpha_i s+\bar{\beta_i} \right)\left({\sqrt{y}\o n} \right)^{-d_Fs} \ll_{F,\d} y^{-\frac{1}{4}+\delta} \label{est 1.4b}
				\end{align} 
			as $y\to \i$ for any $\d>0$.
				\item [(c)] The estimate $\mathcal{P} _{\boldsymbol{\alpha,\beta} ,0}(y)={O}_{F,\d}\left( y^{-\frac{1}{4}+\delta} \right)$ as $y\to \i$ for any $\d>0$ implies the \textup{GRH}  for $F(s).$ 
			\end{itemize}
		\end{theorem}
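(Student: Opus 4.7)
The plan is to realize $\mathcal{P}_{\boldsymbol{\alpha,\beta},z}(y)$ as a Mellin--Barnes contour integral whose integrand involves $\prod_i\Gamma(\alpha_i s+\bar\beta_i)$ and $1/F(1-d_F s)$, and then to exploit horizontal contour shifts together with GRH-based bounds on $1/F$ and the pointwise estimates for $Z_{\boldsymbol{\alpha,\beta}}$ supplied by Lemmas \ref{lem:Z tilda bd}--\ref{lem:Z tilda dash bd}.

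For parts (a) and (b) I substitute the Mellin--Barnes representation \eqref{defZ} of $Z_{\boldsymbol{\alpha,\beta}}$ into \eqref{defP} and expand $\cosh(\sqrt{y}z/n)=\sum_{k\geq 0}(yz^{2})^{k}/((2k)!\,n^{2k})$. On a line $\Re(s)=c$ with $-c_F<c<0$ the Dirichlet series $\sum_n b_F(n)/n^{1+2k-d_Fs}$ converges absolutely to $1/F(1+2k-d_Fs)$ (the bound $b_F(n)\ll n^{\varepsilon}$ for $F\in\mathcal{S}^*$ coming from the polynomial Euler product via \eqref{inequality} justifies interchange), giving
\begin{equation*}
\mathcal{P}_{\boldsymbol{\alpha,\beta},z}(y)=\sum_{k=0}^{\infty}\frac{(yz^{2})^{k}}{(2k)!}\cdot\frac{1}{2\pi i}\int_{(c)}\prod_{i=1}^{q}\Gamma(\alpha_i s+\bar\beta_i)\,\frac{y^{-d_F s/2}}{F(1+2k-d_F s)}\,ds.
\end{equation*}
Under GRH, the nearest poles of $1/F(1+2k-d_F s)$ to the right of $(c)$ lie on $\Re(s)=(1/2+2k)/d_F$. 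I shift the $k$-th contour to $\Re(s)=(1/2+2k)/d_F-\delta'/d_F$ for small $\delta'>0$. The only pole swept over is at $s=0$, where $\prod_i\Gamma$ has a pole of order $j_F$, and (only for $k=0$) $1/F(1+2k-d_F s)$ has a zero of order $k_F$, giving a net order $\max(0,j_F-k_F)$. Stirling gives exponential decay of $\prod\Gamma$ in $|\Im s|$ while the standard conditional estimate $1/F(\sigma+it)\ll(1+|t|)^{\varepsilon}$ (for any fixed $\sigma>1/2$ under GRH) gives polynomial growth, so the $k$-th integral is $O(y^{-1/4-k+\delta'/2})$; multiplying by $(yz^{2})^{k}/(2k)!$ and summing against $\sum_k z^{2k}/(2k)!=\cosh|z|$ yields $O_{F,\delta,z}(y^{-1/4+\delta})$. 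Since $j_F=0$ in (a), no residues appear and this is the claim.

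For (b), with $z=0$ only the $k=0$ term survives, and the residue at $s=0$ is a polynomial in $\log\sqrt{y}$ of degree $j_F-k_F-1$ that does not decay in $y$. I interpret this residue termwise as the formal sum $\sum_{n\geq 1}(b_F(n)/n)\,\Res_{s=0}\prod_i\Gamma(\alpha_i s+\bar\beta_i)(\sqrt{y}/n)^{-d_F s}$, and observe that transferring its truncated piece $1\leq n<[y^{1/2-\varepsilon}]$ to the left-hand side yields exactly the correction appearing in \eqref{est 1.4b}. The tail $n\geq[y^{1/2-\varepsilon}]$ is then absorbed into $O(y^{-1/4+\delta})$ by combining the Ramanujan-type bound on $b_F(n)$ with the fact that the residue is a polynomial of fixed degree in $\log(\sqrt{y}/n)$ and using the refined estimates on $Z_{\boldsymbol{\alpha,\beta}}$ and its derivative in Lemmas \ref{lem:Z tilda bd} and \ref{lem:Z tilda dash bd}. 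This matching between the residue at $s=0$ and the truncated correction sum is the principal technical obstacle.

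For (c), I compute the Mellin transform of $\mathcal{P}_{\boldsymbol{\alpha,\beta},0}(y)$: the substitution $x=(\sqrt{y}/n)^{d_F}$ and Mellin inversion of \eqref{defZ} give
\begin{equation*}
\tilde{\mathcal{P}}(s):=\int_{0}^{\infty}\mathcal{P}_{\boldsymbol{\alpha,\beta},0}(y)\,y^{s-1}\,dy=\frac{2}{d_F}\prod_{i=1}^{q}\Gamma\!\left(\tfrac{2\alpha_i s}{d_F}+\bar\beta_i\right)\cdot\frac{1}{F(1-2s)},
\end{equation*}
initially for $\Re(s)$ in a small strip to the left of $0$. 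The hypothesis $\mathcal{P}(y)\ll y^{-1/4+\delta}$ makes $\int_{1}^{\infty}\mathcal{P}(y)y^{s-1}\,dy$ analytic in $\Re(s)<1/4-\delta$, and bounds on $\mathcal{P}(y)$ for small $y$ coming from Lemma \ref{lem:Z tilda bd} extend $\int_{0}^{1}$ to the same strip. Hence $\tilde{\mathcal{P}}(s)$ is analytic in $0<\Re(s)<1/4-\delta$; since $\prod_i\Gamma(2\alpha_is/d_F+\bar\beta_i)$ has no poles in this strip, $F(1-2s)$ must be zero-free there. Writing $u=1-2s$, $F$ has no zeros in $1/2+2\delta<\Re(u)<1$; the functional equation \eqref{Functional equation} yields the mirror statement in $0<\Re(u)<1/2-2\delta$, and letting $\delta\downarrow 0$ gives GRH.
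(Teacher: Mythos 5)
Your part (c) is essentially the paper's own argument: both compute the Mellin transform of $\mathcal{P}_{\boldsymbol{\alpha,\beta},0}$ via Lemma \ref{lemmaz0}, extend the resulting identity across $\Re(s)=0$ using the assumed decay of $\mathcal{P}$ for the integral over $[1,\infty)$ and Lemma \ref{lem:Z poly bound} for the integral over $(0,1)$, and read off a zero-free region for $F(1-2s)$, finishing with the functional equation. For parts (a) and (b), however, you take a genuinely different route. The paper never shifts a contour: it applies partial summation directly to the series \eqref{defP}, using the GRH bound $M_F(x)\ll x^{1/2+\delta}$ (Iwaniec--Kowalski, Prop.\ 5.14) for the summatory function of $b_F$, together with the pointwise exponential-decay estimates for $\tilde Z_{\boldsymbol{\alpha,\beta}}$ and $\tilde Z'_{\boldsymbol{\alpha,\beta}}$ (Lemmas \ref{lem:Z tilda bd}, \ref{lem:Z tilda dash bd}) in the range $n\ll\nu$ and the polynomial bounds (Lemmas \ref{lem:Z poly bound}, \ref{lem:Z' bd}) for larger $n$. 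You instead fold the whole sum into a Mellin--Barnes integral against $1/F(1+2k-d_Fs)$ and shift to $\Re(s)=(1/2+2k)/d_F-\delta'/d_F$, using the conditional bound $1/F(\sigma+it)\ll|t|^{\varepsilon}$ for fixed $\sigma>1/2$. This is a legitimate and arguably cleaner alternative for part (a) (the requisite bound on $1/F$ holds for $F\in\mathcal{S}^*$ under GRH by the same Iwaniec--Kowalski machinery), though you should verify that the implied constant in your ``$k$-th integral is $O(y^{-1/4-k+\delta'/2})$'' grows at most like $k!\,A^{k}$ so that division by $(2k)!$ still gives a convergent series; as stated you sum against $\cosh|z|$ as if the constants were uniform in $k$.

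The genuine gap is in part (b), precisely at the step you flag as ``the principal technical obstacle'' and then do not resolve. The residue your contour shift produces is $\Res_{s=0}\bigl(\prod_i\Gamma(\alpha_is+\bar\beta_i)\,y^{-d_Fs/2}/F(1-d_Fs)\bigr)$, a single quantity involving derivatives of $1/F$ at the point $1$; rewriting it as the infinite sum $\sum_{n\ge1}\frac{b_F(n)}{n}\Res_{s=0}\prod_i\Gamma(\alpha_is+\bar\beta_i)(\sqrt{y}/n)^{-d_Fs}$ requires termwise residue extraction of the Dirichlet series $\sum_n b_F(n)n^{-u}$ at $u=1$, which is only on the boundary of absolute convergence; this needs the GRH-conditional convergence of $\sum_n b_F(n)n^{-u}$ for $\Re(u)>1/2$, not the Ramanujan bound. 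More seriously, your proposed absorption of the tail $n\ge[y^{1/2-\varepsilon}]$ cannot work as described: each termwise residue is $\frac{b_F(n)}{n}$ times a fixed-degree polynomial in $\log(\sqrt{y}/n)$, so with only $b_F(n)\ll n^{\delta}$ the tail does not even converge absolutely, and Lemmas \ref{lem:Z tilda bd}--\ref{lem:Z tilda dash bd} (which bound $\tilde Z$, not this residue polynomial) are irrelevant to it. What is actually needed is square-root cancellation, i.e.\ partial summation against $M_F(h,N)\ll_{\delta}h^{-1/2+\delta}$ from \eqref{bd on MFhn}, which gives the tail bound $y^{-1/4+\delta}$; this is exactly the ingredient the paper's proof is built around and the reason its correction term in \eqref{est 1.4b} is truncated at $[y^{1/2-\epsilon}]-1$ in the first place. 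Until that substitution is made, part (b) of your argument is incomplete.
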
 
	Part $(c)$ is valid for the larger class of functions satisfying axioms (i)-(iv) of the Selberg class.  
We remark that our result  gives an equivalent condition for the GRH when $F\in \mathcal{S^*}$ satisfies $j_F=0$, namely we show that in this case GRH is equivalent to the estimate $\mathcal{P} _{\boldsymbol{\alpha,\beta} ,0}(y)\ll_{F,\d}\left( y^{-\frac{1}{4}+\delta} \right)$. The role of $j_F$ (and hence the data of $F$) is a new feature of our result, and has not been recorded explicitly before. 
	
	Until this point, we have not made use of the flexibility offered by the parameter $z$ in the definition \eqref{defP} of $\mathcal{P} _{\boldsymbol{\alpha,\beta} ,z}(y)$. This parameter allows us to give a criterion for an `almost' GRH phenomenon as follows. This is a generalization of Theorem 1.1(b) of \cite{riesz}.
	\begin{theorem}\label{finite zero} 
			Let $F$ satisfy axioms (i)-(iv) of the class $\mathcal{S}$. 
		If $z \ne 0$ and $\arg z \ne \pm {\pi \o 2}$, then 
		the estimate $\mathcal{P} _{\boldsymbol{\alpha,\beta} ,z}(y)={O}_{F, \d}\left( y^{-\frac{1}{4}+\delta} \right)$,  as $y\to \i$ for any $\d>0$ implies that $F(s)$ has at most finitely many non-trivial zeros off the critical line.
	\end{theorem}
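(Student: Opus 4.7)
The plan is to convert the assumed pointwise decay of $\mathcal{P}_{\boldsymbol{\alpha,\beta},z}(y)$ into analytic continuation information for $1/F(1-s)$ through a Mellin transform argument, adapting the strategy of \cite{riesz} for $\zeta(s)$. Setting $x=\sqrt{y}$, first compute the Mellin transform of $\mathcal{P}_{\boldsymbol{\alpha,\beta},z}(x^{2})$ in $x$. Using the Dirichlet series $1/F(1-s)=\sum_{n}b_F(n)\,n^{s-1}$ (absolutely convergent for $\Re(s)<0$), the Mellin--Barnes representation \eqref{defZ} of $Z_{\boldsymbol{\alpha,\beta}}$, the Taylor expansion $\cosh(uz)=\sum_{k\ge 0}(uz)^{2k}/(2k)!$, and the rapid decay of $Z_{\boldsymbol{\alpha,\beta}}$ supplied by Lemma \ref{lem:Z tilda bd} to justify the termwise interchanges, one obtains
\[
\int_{0}^{\infty} x^{s-1}\,\mathcal{P}_{\boldsymbol{\alpha,\beta},z}(x^{2})\,dx \;=\; \frac{H(s)}{d_F\,F(1-s)},\qquad H(s):=\sum_{k=0}^{\infty}\frac{z^{2k}}{(2k)!}\prod_{i=1}^{q}\Gamma\!\left(\frac{\alpha_{i}(s+2k)}{d_F}+\bar{\beta_i}\right),
\]
initially valid on a vertical strip of the form $\{\sigma_{0}<\Re(s)<0\}$.

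Next, the hypothesis $\mathcal{P}_{\boldsymbol{\alpha,\beta},z}(y)=O_{F,\delta}(y^{-1/4+\delta})$ translates to $\mathcal{P}_{\boldsymbol{\alpha,\beta},z}(x^{2})=O(x^{-1/2+2\delta})$ as $x\to\infty$, which shows that the Mellin integral on the left extends analytically to the half-plane $\{\Re(s)<1/2-2\delta\}$ for every $\delta>0$. Combining this with a separate analysis of $\mathcal{P}_{\boldsymbol{\alpha,\beta},z}(x^{2})$ as $x\to 0^{+}$---carried out by shifting the Mellin--Barnes contour in the definition of $Z_{\boldsymbol{\alpha,\beta}}$ past the pole at $s=0$ contributed by those $\beta_{i}$ that vanish, and by accounting for the polar contribution of $F$ at $s=1$---the transform is holomorphic in a strip $\{a<\Re(s)<1/2\}$ for some $a\in\mathbb{R}$. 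By analytic continuation the identity above propagates to this strip, so $H(s)/F(1-s)$ is holomorphic there. Since $1/F(1-s)$ has a pole at $s=1-\rho$ for every non-trivial zero $\rho$ of $F$ with $\Re\rho>1/2$, holomorphy forces $H(1-\rho)=0$ at every such point.

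The final step is to show that only finitely many such matches are possible under the hypothesis $\arg z\ne\pm\pi/2$. Applying Stirling to the $q$-fold Gamma product and summing term-by-term, on vertical lines $|H(s)|$ decays at the rate $\exp(-\pi|\Im s|\sum_{i}\alpha_{i}/(2d_F))=\exp(-\pi|\Im s|/4)$ modulated by a sub-exponential $\cosh(z\sqrt{c\,|\Im s|})$-type factor produced by the $\cosh$ series. The condition $\arg z\ne\pm\pi/2$ keeps $z^{2}$ off the negative real axis, so the series $\sum_k z^{2k}/(2k)!$ has all positive real weights in absolute terms and does not exhibit the alternating-sign cancellation that would allow $H$ to oscillate densely in $\Im s$; in the archetypal Riemann case ($q=1$, $\alpha_{1}=1/2$, $\beta_{1}=0$) this can be checked directly, as $H$ then reduces cleanly to $\Gamma(s/2)\,{}_{1}F_{1}(s/2;\,1/2;\,z^{2}/4)$, whose zeros lie on a sparse family of curves determined by $z$. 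A Jensen/Phragm\'en--Lindel\"of argument in the rectangle $\{0<\Re s<1/2,\ |\Im s|<T\}$, exploiting the exponential decay of $|H|$ on the horizontal sides, then bounds the zero-count of $H$ in this box by a quantity strictly smaller than the Selberg-class density $N_F(T)\sim(d_F T/2\pi)\log T$ of non-trivial zeros of $F$ in the same box. Hence only finitely many of the equations $H(1-\rho)=0$ can be satisfied, forcing all but finitely many non-trivial zeros of $F$ to have $\Re\rho\le 1/2$; the functional equation \eqref{Functional equation} dispatches the mirror zeros with $\Re\rho<1/2$.

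The main obstacle in executing this programme is the final zero-counting step: while the Mellin transform set-up and the analytic extension are routine, converting the condition $\arg z\ne\pm\pi/2$ into a strictly sub-$N_F(T)$ bound on the zeros of the entire function $H(s)$ in the critical half-strip demands careful use of Stirling's formula together with a Phragm\'en--Lindel\"of argument adapted to the confluent-hypergeometric structure of $H$, and it is the comparison between the zero-densities of $H$ and $F$ that is the crux of the proof.
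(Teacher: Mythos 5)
Your Mellin-transform set-up is, after the substitution $s\mapsto-2s$, exactly the paper's Lemma \ref{lemmaz0}: your $H(s)/(d_F F(1-s))$ is the paper's $\frac{2}{d_FF(2s+1)}\sum_{t\ge0}\frac{z^{2t}}{(2t)!}\prod_i\Gamma(\tfrac{2\alpha_i}{d_F}(t-s)+\bar{\beta_i})$ in disguise, and the analytic continuation of the identity up to $\Re(s)=1/2$ using the assumed decay of $\mathcal P_{\boldsymbol{\alpha,\beta},z}$ is also how the paper proceeds. (A small caveat: to justify the termwise interchanges you invoke Lemma \ref{lem:Z tilda bd}, which requires rational $\alpha_i$ and $\Re(\beta_i)>0$ and is therefore not available under the hypotheses of Theorem \ref{finite zero}; the paper's Lemma \ref{lemmaz0} manages with the polynomial bound of Lemma \ref{lem:Z poly bound}.)

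The genuine gap is the endgame. From ``$H(1-\rho)=0$ for every off-line zero $\rho$'' you propose to conclude finiteness by bounding the zero-count of $H$ in a box of height $T$ by something strictly smaller than $N_F(T)\sim\frac{d_F}{\pi}T\log T$. This comparison proves nothing: the off-line zeros form an unknown subset of the non-trivial zeros with no lower bound on its size, so even if $H$ has far fewer zeros than $N_F(T)$ in the box, infinitely many off-line $\rho$ could still all land on zeros of $H$. What is actually needed --- and what you never establish, as you yourself flag --- is that $H$ has only finitely many zeros in the relevant half-strip, i.e.\ that $H(\sigma+iT)\ne0$ for all $|T|$ large. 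That is precisely what the paper proves, and it is the only place the hypothesis on $z$ enters: applying Stirling to each Gamma factor and resumming the series, $H(\sigma+iT)$ equals a manifestly non-vanishing Gamma-type factor times $\cosh\bigl(z\prod_{i=1}^q|2\alpha_iT/d_F-\gamma_i|^{\alpha_i/d_F}\bigr)$ up to a relative error $O_F(1/|T|)$. The argument of the $\cosh$ is $z$ times a positive real quantity of order $|T|^{1/2}$ (since $\sum_i\alpha_i/d_F=1/2$); $\cosh w$ vanishes only for purely imaginary $w$, and the conditions $z\ne0$, $\arg z\ne\pm\pi/2$ force $|\Re(zw)|\to\infty$, hence $|\cosh(zw)|\to\infty$ and the right-hand side is non-zero for $|T|>T_{F,z}$. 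Therefore $F(2s+1)$ cannot vanish for $-1/4<\Re(s)<0$ and $|T|$ large, which yields the finiteness directly, with no zero-counting of $H$ required. In your write-up the condition $\arg z\ne\pm\pi/2$ appears only in an informal remark about the absence of sign cancellation in the series, which does not substitute for this non-vanishing argument.
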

	  
\begin{remark*}
	We note that the Ramanujan bound (axiom (v)) for $F(s)$ will be used only  in the proofs of parts (a) and (b) of Theorem \ref{Rhlselberg}. In particular, it is used to conclude that GRH implies square root cancellation in the summatory function of $b_F(n)$, which is in turn used to derive  the estimate \eqref{bd on MFhn}.   
\end{remark*}

	\section{Preliminary results}\label{prelim}
Throughout this section, let $F\in \mathcal{S}$. Let us consider 
	\begin{align}\label{defNF}
		N_F(T) : = \# \{s: F(s)=0 ,0 < \Re(s) <1 \text{ and } |\Im (s)| <T \}.
	\end{align}
	It is well-known that as $T\to \infty $, $N_F(T)$ satisfies the asymptotic formula (see \cite[p. 264]{Anirban})
	\begin{equation}
		N_F(T)= \frac{d_F}{\pi }T{\log}T+CT+O(\log T),
	\end{equation}
	where $d_F$ is the degree of $F$ and $C=C(F)$ is some constant. This in particular gives 
	\begin{equation}\label{dens}
		N_F\left(T+1\right)-N_F\left(T\right)\ll \log T.
	\end{equation} 
	From \eqref{dens}, one can conclude that the  number of zeros of $F(s)$ in the critical strip between the horizontal lines Im$(s)=T+1$ and Im$(s)=T-1$ is ${O}(\log T)$.

Another result that will be used repeatedly in our proofs is the following well-known estimate for the Gamma function. 
	\begin{lemma}[Stirling's formula,\cite{cop}]\label{lemmastir}
		For $s=\sigma+it$ with $C\leq\sigma\leq D$, we have as $|t| \to \infty$, 
		\begin{equation}\label{strivert}
			|\Gamma(s)|=(2\pi)^{\frac{1}{2}}|t|^{\sigma-\tfrac{1}{2}}e^{-\frac{1}{2}\pi |t|}\left(1+O\left(\frac{1}{|t|}\right)\right).
		\end{equation}
	\end{lemma}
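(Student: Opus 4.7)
The plan is to derive the asymptotic by starting from Binet's second integral formula for the log-Gamma function, valid for $\Re(s)>0$:
\begin{equation*}
\log\Gamma(s) = \left(s-\tfrac{1}{2}\right)\log s - s + \tfrac{1}{2}\log(2\pi) + 2\int_0^\infty \frac{\arctan(u/s)}{e^{2\pi u}-1}\,du.
\end{equation*}
First I would show that the Binet integral contributes only $O(1/|s|)$ as $|s|\to\infty$ in the right half-plane. This is standard: write $\arctan(u/s) = u/s + O((u/|s|)^3)$ for small $u$, bound the remainder via $|\arctan(u/s)| \le \pi/2$, and split the integral at $u=|s|$; the bulk piece is handled by the exponential decay of $1/(e^{2\pi u}-1)$.

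Next I would take real parts in Binet's formula for $s=\sigma+it$ with $C\le\sigma\le D$ and $|t|\to\infty$. Writing $s=|s|e^{i\arg s}$ with $|s|^2 = \sigma^2+t^2$ and $\arg s=\arctan(t/\sigma)$, the key expansions are
\begin{equation*}
\log|s| = \log|t| + \frac{\sigma^2}{2t^2} + O\!\left(\tfrac{1}{t^4}\right), \qquad \arg s = \operatorname{sgn}(t)\frac{\pi}{2} - \frac{\sigma}{t} + O\!\left(\tfrac{1}{t^3}\right).
\end{equation*}
Substituting into $(s-\tfrac{1}{2})\log s = (\sigma-\tfrac{1}{2}+it)(\log|s|+i\arg s)$ and collecting real parts gives
\begin{equation*}
\Re\bigl[(s-\tfrac{1}{2})\log s\bigr] = (\sigma-\tfrac{1}{2})\log|t| - t\,\arg s + O\!\left(\tfrac{1}{|t|}\right) = (\sigma-\tfrac{1}{2})\log|t| - \tfrac{\pi}{2}|t| + \sigma + O\!\left(\tfrac{1}{|t|}\right),
\end{equation*}
where the crucial point is that $t\cdot\arg s = \tfrac{\pi}{2}|t|-\sigma+O(1/|t|)$ on both sides $t\to\pm\infty$, producing a $+\sigma$ that precisely cancels the $-\sigma$ coming from $\Re(-s)$. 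Putting the pieces together,
\begin{equation*}
\log|\Gamma(s)| = (\sigma-\tfrac{1}{2})\log|t| - \tfrac{\pi}{2}|t| + \tfrac{1}{2}\log(2\pi) + O\!\left(\tfrac{1}{|t|}\right),
\end{equation*}
and exponentiating (and absorbing the $O(1/|t|)$ into the multiplicative form $1+O(1/|t|)$) yields the stated identity on the strip $\{0<C\le\sigma\le D\}$.

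Finally, to cover the case $C\le 0$, I would apply the reflection formula $\Gamma(s)\Gamma(1-s)=\pi/\sin(\pi s)$. For $s=\sigma+it$ with $\sigma\le 0$ bounded and $|t|\to\infty$, the right-hand side satisfies $|\sin(\pi s)| = \tfrac{1}{2}e^{\pi|t|}(1+O(e^{-\pi|t|}))$, and the previous case applies to $\Gamma(1-s)$ on the shifted strip $1-D\le \Re(1-s)\le 1-C$. A short computation verifies that the resulting expression for $|\Gamma(s)|$ on the left of the strip has exactly the same form as for the right. The main obstacle is the careful bookkeeping in the second step to confirm the cancellation of $\sigma$-dependent terms that would otherwise spoil the clean shape of the stated estimate; everything else is routine expansion.
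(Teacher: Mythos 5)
The paper gives no proof of this lemma at all --- it is quoted as a classical result with a citation to Copson --- so any derivation you supply is by definition a different route. Your argument (Binet's second formula for $\Re(s)>0$, real-part bookkeeping to extract $(\sigma-\tfrac12)\log|t|-\tfrac{\pi}{2}|t|$ with the cancellation of the $\pm\sigma$ terms, then the reflection formula to reach $\sigma\le 0$) is one of the standard textbook proofs and the computations you sketch are correct: in particular $t\arg s=\tfrac{\pi}{2}|t|-\sigma+O(t^{-2})$ on both sides $t\to\pm\infty$ and the resulting $+\sigma$ does cancel $\Re(-s)=-\sigma$, and the reflection step returns exactly the same shape because $|\sin(\pi s)|=\tfrac12 e^{\pi|t|}\bigl(1+O(e^{-2\pi|t|})\bigr)$ while $\Gamma(1-s)$ contributes $|t|^{\tfrac12-\sigma}e^{-\pi|t|/2}$. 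The one place where your sketch is not quite right as written is the bound $|\arctan(u/s)|\le\pi/2$: this holds for real arguments but fails for complex ones, since $\arctan$ has logarithmic singularities at $\pm i$, and $u/s$ approaches $\pm i$ precisely when $u\asymp|t|$. This does not break the proof --- in that regime the factor $(e^{2\pi u}-1)^{-1}$ is exponentially small, so a bound of the form $|\arctan(u/s)|\ll\log(2+u)$ on the tail piece suffices, and your plan of splitting the integral and invoking the exponential decay is exactly how one absorbs it --- but the justification of the $O(1/|s|)$ estimate for the Binet integral should use such a bound (or the standard estimate $|\mu(s)|\le\tfrac{1}{8|s|\cos^{2}(\arg s/2)}$, which is uniform on vertical strips) rather than the real-variable inequality. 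With that repair the proof is complete and uniform in $C\le\sigma\le D$, which is what the paper needs.
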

	The Meijer G-function is defined as \cite[p. 143]{luke}
	\begin{align}\label{meijer G}
		G_{p,q}^{m,n}\left( \left. \begin{matrix}
			a_1 & \cdots & a_p\\ b_1 & \cdots &  b_q
		\end{matrix} \right| z \right)=\frac{1}{2 \pi i}\int\limits_L\frac{\Pi_{j=1}^m \Gamma(b_j-s) \Pi_{j=1}^n \Gamma(1-a_j+s)}{\Pi_{j=m+1}^{q} \Gamma(1-b_j+s) \Pi_{j=n+1}^{p} \Gamma(a_j-s)} z^sds,
 	\end{align}
	where $L$ is a curve from $-i\i$ to $i\i$ which separates the poles of the factors $\Gamma(b_j-s)$ from those of the factors $\Gamma(1-a_j+s)$. For more details, we refer the reader to \cite[p. 143]{luke}.
	
Recall the definitions \eqref{defZ} and \eqref{defP} of ${Z_{\boldsymbol{\a,\b}}(x)}$ and $\mathcal{P} _{\boldsymbol{\alpha,\beta} ,z}(y)$ respectively. We will prove some preliminary results about these functions.
\begin{lemma}\label{lemmaz0}
Let $0 <\textup{Re}(s)<{1 \over 2}$, $\boldsymbol{\a}=(\a_1,\cdots, \a_q), \ \boldsymbol{\b}=(\b_1,\cdots, \b_q),$ with  $\a_i>0$ and $\Re(\b_i)>0$ for each $i$. Then
\begin{equation}\label{lemma2}
\int_{0}^{\infty}y^{-s-1}\mathcal{P} _{\boldsymbol{\alpha,\beta} ,z}(y)dy=\frac{2 }{d_F F(2s+1)}\sum_{t=0}^{\i}{z^{2t}\over(2t)!}  {\prod_{i=1}^q\Gamma\left({2\alpha_i\o d_F} (t-s)+\bar{\beta_i} \right)}.
		\end{equation}
	\end{lemma}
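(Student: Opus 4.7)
The approach is a direct computation: substitute the defining series \eqref{defP} for $\mathcal{P}_{\bs{\a,\b},z}$, expand the hyperbolic cosine as a power series, swap integration and summation, and recognise the remaining integral as the Mellin transform of $Z_{\bs{\a,\b}}$ at an appropriate point. Granting the interchanges (treated below), the left-hand side of \eqref{lemma2} becomes
$$\sum_{n=1}^{\infty}\frac{b_F(n)}{n}\sum_{t=0}^{\infty}\frac{z^{2t}}{(2t)!\,n^{2t}}\int_{0}^{\infty}y^{t-s-1}\,Z_{\bs{\a,\b}}\!\left(\bigl(\tfrac{\sqrt{y}}{n}\bigr)^{d_F}\right)dy.$$

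For the inner integral I make the change of variable $u=(\sqrt{y}/n)^{d_F}$, so that $y=n^{2}u^{2/d_F}$ and $dy=(2n^{2}/d_F)\,u^{2/d_F-1}\,du$. A short calculation reduces it to $\tfrac{2}{d_F}n^{2(t-s)}\int_{0}^{\infty}u^{\frac{2}{d_F}(t-s)-1}Z_{\bs{\a,\b}}(u)\,du$, and since $Z_{\bs{\a,\b}}$ is defined by \eqref{defZ} as the inverse Mellin transform of $\prod_{i=1}^{q}\Gamma(\a_i\sigma+\bar{\b}_i)$, Mellin inversion yields
$$\int_{0}^{\infty}u^{\sigma-1}Z_{\bs{\a,\b}}(u)\,du=\prod_{i=1}^{q}\Gamma\!\left(\a_i\sigma+\bar{\b}_i\right),$$
valid in the strip of analyticity of the Gamma product. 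The hypothesis $\Re(\b_i)>0$ places every pole strictly to the left of $0$, so both $\sigma=-2s/d_F$ (the $t=0$ term, for $\Re(s)$ small) and $\sigma=\tfrac{2}{d_F}(t-s)$ for $t\ge 1$ lie in the strip of validity. Substituting $\sigma=\tfrac{2}{d_F}(t-s)$ produces the Gamma factors appearing on the right of \eqref{lemma2}.

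Collecting the $n$-dependence, the combination of $b_F(n)/n$ with $1/n^{2t}$ and $n^{2(t-s)}$ reduces to $b_F(n)/n^{1+2s}$, which is independent of $t$. Factoring this out and summing over $n$ gives $1/F(2s+1)$, absolutely convergent because $\Re(2s+1)>1$; what remains is exactly the sum over $t$ on the right-hand side of \eqref{lemma2}, completing the identity.

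The principal obstacle is the rigorous justification of the two interchanges. For the $n$-swap one needs uniform estimates on $Z_{\bs{\a,\b}}$: the assumption $\Re(\b_i)>0$ allows the contour in \eqref{defZ} to be shifted slightly to the right of $0$, giving $Z_{\bs{\a,\b}}(u)=O(u^{c'})$ as $u\to 0^+$ for some $c'>0$, while Stirling's formula (Lemma \ref{lemmastir}) delivers superpolynomial decay as $u\to\infty$. Together with absolute convergence of $\sum_n b_F(n)/n^{1+2s}$ on $\Re(s)>0$, these bounds make Fubini applicable. For the $t$-swap the factorial $(2t)!$ in the cosh expansion dominates the at-most-polynomial growth of the integrand in $t$ (on any compact $y$-range), and dominated convergence then justifies pulling the integral through the $t$-sum.
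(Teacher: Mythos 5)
Your proposal is correct and follows essentially the same route as the paper: expand the $\cosh$ into its power series, interchange sums and integral, evaluate the inner integral via Mellin inversion of $Z_{\bs{\a,\b}}$ (after the change of variable that produces the factor $2/d_F$ and the shifted argument $\tfrac{2\a_i}{d_F}(t-s)+\bar\b_i$), and identify $\sum_n b_F(n)n^{-2s-1}=1/F(2s+1)$. The only cosmetic difference is that the paper first multiplies by $a_F(n)$ and sums, so that the Dirichlet series collapse via the convolution $a_F\ast b_F$, whereas you sum the $b_F$-series directly at the end; the substance is identical.
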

	
	\begin{proof}
Let \begin{align}
\phi (s,\boldsymbol{\alpha,\beta} ,z)=\int_{0}^{\infty}y^{-s-1}\mathcal{P} _{\boldsymbol{\alpha,\beta} ,z}(y)dy.
		\end{align}
Substituting $y$ by ${x\o n^2}$, we have 
		\begin{align}
n^{-2s-1}\phi (s,\boldsymbol{\alpha,\beta} ,z)=\int_{0}^{\infty}{x^{-s-1}\o n}\mathcal{P} _{\boldsymbol{\alpha,\beta} ,z}\left( {x\o n^2}\right) dx.
		\end{align}
Multiplying both sides by $a_F(n)$ and summing over all $n$, we obtain from \eqref{defP}
		\begin{align}
F&(2s+1)\phi (s,\boldsymbol{\alpha,\beta} ,z)\\
			&=\sum\limits_{n = 1}^\infty\int_{0}^{\infty}{x^{-s-1}a_F(n)\o n}\sum\limits_{m = 1}^\infty  \frac{{b_F(m)}}{m}{Z_{\a,\b}}\left( \left({\frac{\sqrt{x} }{mn}}\right)^{d_F} \right) \cosh\left({\sqrt{x}z\over nm}\right) dx\nonumber \\
			&=\sum\limits_{n = 1}^\infty\int_{0}^{\infty}{x^{-s-1}a_F(n)\o n}\sum\limits_{m = 1}^\infty  \frac{{b_F(m)}}{m}\sum_{t=0}^{\i}{z^{2t}x^t\over m^{2t}n^{2t}(2t)!} \frac{1}{2 \pi i} \int\limits_{(c)} \prod_{i=1}^q\Gamma\left(\alpha_i s'+\bar{\beta_i} \right){\left( {\frac{\sqrt{x} }{mn}} \right)^{ -d_F s'}}ds' dx,
		\end{align}	
		using the series expansion for $\cosh(x)$. Applying  the Weierstrass M-test and the Lebesgue dominated convergence theorem, one can show that 
		\begin{align}
			F(2s+1)&\phi (s,\boldsymbol{\alpha,\beta} ,z)\\
			&=\sum_{t=0}^{\i}{z^{2t}\over(2t)!} \int_{0}^{\infty}{x^{-s+t-1}}\frac{1}{2 \pi i} \int\limits_{(c)} \frac{\prod_{i=1}^q\Gamma\left(\alpha_i s'+\bar{\beta_i} \right)}{F(2t+1-d_Fs')}\sum\limits_{n = 1}^\infty{a_F(n)\o n^{2t+1-d_Fs'}}{\left( {{\sqrt{x} }} \right)^{ - d_Fs'}}ds' dx\nonumber \\
			&=\sum_{t=0}^{\i}{z^{2t}\over(2t)!}\int_{0}^{\infty}{x^{-s+t-1}}\frac{1}{2 \pi i} \int\limits_{(c)} {\prod_{i=1}^q\Gamma\left(\alpha_i s'+\bar{\beta_i} \right)}{\left( {{\sqrt{x} }} \right)^{ - d_Fs'}}ds' dx.\label{eq411}
		\end{align} 
Substituting  $s'$ by ${-2w\o d_F}$, we have
		\begin{align}
\frac{1}{2 \pi i}	\!	\int_{0}^{\infty} \!\!\! {x^{-s+t-1}}  \!\! \int\limits_{(c)} \! {\prod_{i=1}^q\Gamma\left(\alpha_i s'+\bar{\beta_i} \right)}{{{{x} }} ^{ - d_Fs' \o 2}}ds'  dx
&=\frac{2}{d_F2 \pi i} \int_{0}^{\infty}\!\!\!\! {x^{-s+t-1}} \!\!\!\! \int\limits_{{\left( -cd_F\o 2\right) }} \! {\prod_{i=1}^q\Gamma\left(- \tfrac{2\alpha_i w} { d_F}+\bar{\beta_i} \right)}{x} ^{w}dw  dx\\
&= {2\o d_F}{\prod_{i=1}^q\Gamma\left({2\alpha_i\o d_F} (t-s)+\bar{\beta_i} \right)},\label{eq412}
		\end{align}
upon replacing $x$ by $\frac{1}{x}$ and using the Mellin inversion theorem \cite[p. 341-343]{Mcla}. Combining \eqref{eq411} and \eqref{eq412} completes the proof.
\end{proof}
	

	\begin{lemma} \label{lem:Z poly bound}
		For $-c_F<c<0$ and $x>0$, we have 
		\begin{align}
			Z_{\boldsymbol{\alpha,\beta} }(x)\ll_F x^{-c}.
		\end{align}
	\end{lemma}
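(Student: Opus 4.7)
The plan is to bound the contour integral defining $Z_{\boldsymbol{\alpha,\beta}}(x)$ directly, since the claim amounts to showing that along the vertical line $\Re(s)=c$, the factor $|x^{-s}|=x^{-c}$ can be pulled out while the remainder of the integral converges to a constant depending only on the data of $F$.

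First I would verify that the integrand has no singularities on the contour. The poles of $\prod_{i=1}^q \Gamma(\alpha_i s+\bar\beta_i)$ lie at $s=-(m+\bar\beta_i)/\alpha_i$ for $m\in\mathbb{N}\cup\{0\}$, and by the case split in the definition of $c_F$ the constraint $-c_F<c<0$ guarantees that none of these poles meets the line $\Re(s)=c$: when $\Re(\beta_i)=0$ the closest poles are at real parts $0$ and $-1/\alpha_i$, and $c\in(-\min_i 1/(2\alpha_i),0)$; otherwise the nearest pole on the negative side has real part $-\Re(\beta_i)/\alpha_i\le -c_F<c$. The integrand is therefore continuous on the contour.

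Next, parameterizing $s=c+it$, one has $|x^{-s}|=x^{-c}$, so
\begin{equation*}
|Z_{\boldsymbol{\alpha,\beta}}(x)| \le \frac{x^{-c}}{2\pi}\int_{-\infty}^{\infty}\prod_{i=1}^q\bigl|\Gamma(\alpha_i(c+it)+\bar\beta_i)\bigr|\,dt.
\end{equation*}
It remains to show that the latter integral is a finite constant depending only on $F$. For $|t|$ bounded, the integrand is continuous and hence bounded. For $|t|\to\infty$, I would invoke Stirling's formula (Lemma \ref{lemmastir}) applied to each factor $\Gamma(\alpha_i(c+it)+\bar\beta_i)$: writing $\sigma_i=\alpha_i c+\Re(\bar\beta_i)$, the formula gives
\begin{equation*}
\bigl|\Gamma(\alpha_i(c+it)+\bar\beta_i)\bigr|\ll |\alpha_i t|^{\sigma_i-\tfrac12}e^{-\tfrac{\pi}{2}\alpha_i|t|}\bigl(1+O(1/|t|)\bigr).
\end{equation*}
Taking the product over $i$, the exponential factors combine to $\exp\!\bigl(-\tfrac{\pi}{2}(\sum_i\alpha_i)|t|\bigr)=\exp(-\tfrac{\pi d_F}{4}|t|)$, which decays since $d_F>0$, and dominates the polynomial factor $|t|^{\sum_i(\sigma_i-1/2)}$. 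Consequently the tail integral converges, and the total integral is $O_F(1)$.

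I do not anticipate a serious obstacle: the argument is essentially just Stirling plus an a priori check that the contour is pole-free. The only subtlety is the bookkeeping of the case distinction in the definition of $c_F$, which governs the permissible range of $c$ and ensures that the nearest pole of the Gamma product is kept strictly to the left (or exactly on $\Re(s)=0$ but bypassed by $c<0$). Combining the two bounds yields $Z_{\boldsymbol{\alpha,\beta}}(x)\ll_F x^{-c}$ as claimed.
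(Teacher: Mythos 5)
Your proof is correct and follows essentially the same route as the paper: both pull out $|x^{-s}|=x^{-c}$ on the line $\Re(s)=c$ and show the remaining integral of the Gamma product is $O_F(1)$ via Stirling's formula, using the definition of $c_F$ to keep the contour pole-free. The only cosmetic difference is that the paper first applies $\Gamma(w)=\Gamma(w+1)/w$ to each factor so that the resulting Stirling bound is integrable over all of $\mathbb{R}$ in one stroke, whereas you split into a compact piece (continuity) and a tail (Stirling) — both are fine.
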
	
	\begin{proof}
		Let us write  $\bar \b_i=a_i+ib_i$. Using the functional equation of $\G$, we have from \eqref{defZ},
		\begin{align}               
			Z_{\boldsymbol{\alpha,\beta} }(x)
			& =\frac{1}{2 \pi i} \int_{c-i\infty}^{c+i\infty} {\prod_{i=1}^q\Gamma\left(\alpha_i s+\bar{\beta_i}+1 \right)\o\prod_{i=1}^q\left(\alpha_i s+\bar{\beta_i} \right) }{x^{ - s}}ds\\
			& =\frac{1}{2 \pi i} \int_{-\infty}^{\infty} {\prod_{i=1}^q\Gamma\left(\alpha_ic +a_i +1+i(\a_it-b_i) \right)\o\prod_{i=1}^q\left(\alpha_ic +a_i +i(\a_it-b_i) \right) }{x^{ -c-it}}dt.
		\end{align}	
		It can be seen that 
		\begin{align}
			1+|t|&\ll_F|\alpha_ic +a_i| +|\a_it-b_i|\\
			&\ll_{F} |\alpha_ic +a_i +i(\a_it-b_i)|, 
		\end{align}
		using the Cauchy-Schwarz inequality. Using this for the denominator and  Stirling's formula for the numerator, we get 
		\begin{align}
			Z_{\boldsymbol{\alpha,\beta} }(x)&\ll_F \int_{-\i}^{\i}\frac{\prod_{i=1}^q\left( |\a_it-b_i|^{\a_ic+a_i+{1\o 2}}e^{-{\pi\o 2}|\a_it-b_i|}\right) }{(1+|t|)^{q}}x^{-c}dt.
		\end{align}
		Noting that the exponent $\a_ic+a_i+{1\o2}$ is positive in the region $-c_F<c<0$ due to the choice of $c_F$ (see \eqref{defcF}), it is easy to see that 
		$
		Z_{\boldsymbol{\alpha,\beta} }(x)\ll_Fx^{-c}, 
		$ as needed. 
	\end{proof}
	
	\begin{lemma} \label{lem:Z' bd}
		For $-c_F<c<0$ and $x>0$, we have 
		\begin{align}
			Z'_{\boldsymbol{\alpha,\beta} }\left( { x}\right) \ll_{F}x^{-c-1}, 
		\end{align}
		where $Z'_{\boldsymbol{\alpha,\beta} }(x)$ denotes the derivative of $ Z_{\boldsymbol{\alpha,\beta} }\left( { x}\right)$ with respect to its argument $x$. 
	\end{lemma}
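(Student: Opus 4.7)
The plan is to mimic the proof of Lemma \ref{lem:Z poly bound} almost verbatim, with the one extra bookkeeping factor that arises from differentiating the integrand. First I would justify differentiation under the integral sign in \eqref{defZ}: Stirling's formula (Lemma \ref{lemmastir}) shows that the integrand $\prod_{i=1}^q \Gamma(\alpha_i s + \bar\beta_i) x^{-s}$ decays like $|t|^A e^{-\tfrac{\pi d_F}{4}|t|}$ along the line $\Re(s)=c$, uniformly for $x$ in any compact subset of $(0,\infty)$, so the usual dominated convergence argument lets us differentiate under the integral to obtain
\begin{align}
Z'_{\boldsymbol{\alpha,\beta}}(x) = -\frac{1}{2\pi i}\int_{(c)} s \prod_{i=1}^q \Gamma(\alpha_i s + \bar\beta_i)\, x^{-s-1}\, ds.
\end{align}

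Next, exactly as in the proof of Lemma \ref{lem:Z poly bound}, I would apply the functional equation of $\Gamma$ to rewrite each factor as $\Gamma(\alpha_i s + \bar\beta_i) = \Gamma(\alpha_i s + \bar\beta_i + 1)/(\alpha_i s + \bar\beta_i)$, writing $\bar\beta_i = a_i + i b_i$ and parameterizing $s = c+it$. Using the Cauchy--Schwarz-type estimate $1+|t| \ll_F |\alpha_i c + a_i + i(\alpha_i t - b_i)|$ for the denominator (which now has $q$ factors), and Stirling's formula for the $q$ factors $\Gamma(\alpha_i s+\bar\beta_i+1)$ in the numerator, and bounding the extra $|s| \ll 1+|t|$, I obtain
\begin{align}
Z'_{\boldsymbol{\alpha,\beta}}(x) \ll_F x^{-c-1} \int_{-\infty}^\infty \frac{(1+|t|)\, \prod_{i=1}^q |\alpha_i t - b_i|^{\alpha_i c + a_i + \tfrac{1}{2}}\, e^{-\tfrac{\pi}{2}|\alpha_i t - b_i|}}{(1+|t|)^{q}}\, dt.
\end{align}

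Since each exponent $\alpha_i c + a_i + \tfrac{1}{2}$ is positive in the range $-c_F < c < 0$ (by the very definition of $c_F$ in \eqref{defcF}), and the product of exponential factors $e^{-\tfrac{\pi}{2}|\alpha_i t - b_i|}$ decays faster than any polynomial in $|t|$ as $|t|\to\infty$, the integral converges to a constant depending only on the data of $F$. This yields the desired bound $Z'_{\boldsymbol{\alpha,\beta}}(x)\ll_F x^{-c-1}$.

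The argument contains no essential obstacle: the only noteworthy point is the appearance of the single extra factor $|s| \ll 1+|t|$ in the numerator, which is harmlessly absorbed into the polynomial growth already controlled by the exponential decay in Stirling's formula, and which is precisely what converts the previous bound $x^{-c}$ into $x^{-c-1}$.
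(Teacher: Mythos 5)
Your proposal is correct and is exactly the argument the paper intends: the paper's own proof of this lemma consists of the single remark that it "follows in exactly the same manner" as Lemma \ref{lem:Z poly bound}, and your write-up supplies precisely that adaptation — differentiating under the integral (justified by the Stirling decay $e^{-\pi d_F|t|/4}$ on the line $\Re(s)=c$), which produces the extra factors $-s$ and $x^{-1}$, with $|s|\ll 1+|t|$ harmlessly absorbed by the exponential decay while the $x^{-1}$ converts the bound $x^{-c}$ into $x^{-c-1}$.
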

	\begin{proof}
		The proof follows in exactly the same manner as that of Lemma  \ref{lem:Z poly bound}. 
	\end{proof}

For $x\in \mathbb{R} $ and $1<d<1+c_F$, we define the following function which is closely related to the function ${Z}_{\boldsymbol{\alpha,\beta} }(x)$ defined in \eqref{defZ}. Let 
\begin{equation}\label{def Z tilde}
	\tilde{Z}_{{\boldsymbol{\alpha,\beta}}}(x) :=\frac{1}{2 \pi i} \int\limits_{(d)} {\prod_{i=1}^q\Gamma\left(\alpha_i s+\bar{\beta_i} \right){x^{ - s}}ds}.
\end{equation}
The advantage of using $\tilde{Z}_{{\boldsymbol{\alpha,\beta}}}(x)$ is that one is able to obtain better upper bounds for this function by relating it to the Meijer $G$-function. Moreover, $\tilde{Z}_{{\boldsymbol{\alpha,\beta}}}(x)$ differs from ${Z}_{{\boldsymbol{\alpha,\beta}}}(x)$ only by a residue term. More precisely,  using Cauchy's residue theorem, we have 
	\begin{align} \label{Z vs Z tilda}
	{Z}_{\boldsymbol{\alpha,\beta} }\left(x \right) =\tilde{Z}_{\boldsymbol{\alpha,\beta} }\left(x \right)-\Res_{s=0} \prod_{i=1}^{q}\Gamma\left(\a_is+\bar{\beta_i} \right){\left(x \right)^{ -s}}. 
\end{align}
In what follows, we   show  exponential decay  as $x\to \infty$ for both $\tilde{Z}_{\boldsymbol{\alpha,\beta} }\left( { x}\right)$ as well as  $\tilde{Z}'_{\boldsymbol{\alpha,\beta} }\left( { x}\right)$  (its derivative with respect to $x$). These results are used  when $\tilde{Z}_{\boldsymbol{\alpha,\beta}}$ arises from the data of a function $F$ in the class $\mathcal{S}^*$ defined preceding Theorem \ref{Rhlselberg}. . 
	
	\begin{lemma} \label{lem:Z tilda bd}
		Let $\boldsymbol{\a}=(\a_1,\cdots, \a_q), \ \boldsymbol{\b}=(\b_1,\cdots, \b_q),$ with  $\a_i>0$ and $\Re(\b_i)>0$ for all $i$. Assume that each component of  $\boldsymbol{\a}$ is rational, say $\a_i={m_i\o n_i}$ with $(m_i, n_i)=1$. Let $L$ be the lcm $[n_1, \cdots , n_q]$. Denote the positive integer $\a_i L$ by $k_i$. Using the notation
		\begin{align}\label{Ci}
			C_1=\frac{ L d_F}{ 2\left({ \prod_{i=1}^qk_i^{k_i} }\right)^{2/ Ld_F}}, \quad C_2= \frac{2}{d_F} ,\quad C_3=\sum_{i=1}^{q}\bar{\b_i}+{1-q\o 2},
		\end{align} 
		we have as $x\to \infty$,  
		\begin{align}
			\tilde{Z}_{\boldsymbol{\alpha,\beta} }(x)&\ll_{F}  \exp\left(-C_1x^{C_2}\right)x^{C_2C_3}.
		\end{align}
	\end{lemma}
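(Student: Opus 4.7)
The plan is to reduce $\tilde{Z}_{\boldsymbol{\alpha,\beta}}(x)$ to a Meijer $G^{N,0}_{0,N}$-function with $N := \sum_{i=1}^q k_i = Ld_F/2$, and then invoke the classical exponential asymptotic at infinity for such a function. Since each $\alpha_i = k_i/L$ is rational, I apply the Gauss multiplication formula
\[
\Gamma(kz) = (2\pi)^{(1-k)/2}\, k^{kz-1/2} \prod_{j=0}^{k-1}\Gamma\!\left(z+\tfrac{j}{k}\right)
\]
to each factor $\Gamma(\alpha_i s + \bar{\beta_i}) = \Gamma(k_i(s/L + \bar{\beta_i}/k_i))$. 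This rewrites the integrand in \eqref{def Z tilde} as a constant $A_F$ depending only on the data of $F$, times $M^s \prod_{i=1}^{q}\prod_{j=0}^{k_i-1} \Gamma(s/L + c_{i,j})$, where $M := \prod_{i=1}^q k_i^{k_i/L}$ and $c_{i,j} := \bar{\beta_i}/k_i + j/k_i$. Note that $\Re(c_{i,j}) > 0$ for every pair since $\Re(\bar{\beta_i}) > 0$.

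Next, I substitute $s = Lw$ and reverse orientation via $w \mapsto -w$ to obtain
\[
\tilde{Z}_{\boldsymbol{\alpha,\beta}}(x) = A_F L \cdot \frac{1}{2\pi i}\int_{(-d/L)} \left(\frac{x}{M}\right)^{Lw} \prod_{i,j}\Gamma(c_{i,j}-w)\, dw.
\]
Setting $Y := (x/M)^L$ and comparing with definition \eqref{meijer G}, this integral equals $A_F L \cdot G^{N,0}_{0,N}\!\left(\left.\begin{matrix} \rule{.4cm}{.2mm} \\ c_{1,0} & \cdots & c_{q,k_q-1}\end{matrix}\right| Y\right)$. The shifted contour lies to the left of the poles of each $\Gamma(c_{i,j}-w)$, so the standard admissibility condition of \eqref{meijer G} is satisfied.

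The last step is to invoke the classical asymptotic (see, e.g., \cite{luke})
\[
G^{N,0}_{0,N}\!\left(\left.\begin{matrix} \rule{.4cm}{.2mm} \\ b_1 & \cdots & b_N\end{matrix}\right| Y\right) \sim \frac{(2\pi)^{(N-1)/2}}{\sqrt{N}}\, Y^{\theta}\, \exp\!\left(-N Y^{1/N}\right) \quad \text{as } Y \to \infty,
\]
with $\theta = \tfrac{1}{N}\bigl(\sum_j b_j + (1-N)/2\bigr)$, applied with $\{b_j\} = \{c_{i,j}\}$. A direct computation gives $\sum_{i,j} c_{i,j} = \sum_i \bar{\beta_i} + (N-q)/2$, hence $\theta = C_3/N$. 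Using $L/N = 2/d_F = C_2$, the quantity $N Y^{1/N}$ collapses to exactly $C_1 x^{C_2}$, while $Y^{\theta}$ produces a constant multiple of $x^{C_2 C_3}$; this yields the claimed bound.

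The main obstacle is careful bookkeeping of all the multiplicative constants through the Gauss multiplication and the two substitutions, and invoking (or, if necessary, re-deriving by a saddle-point analysis of the resulting Mellin--Barnes integral) the precise normalization of the asymptotic of $G^{N,0}_{0,N}$.
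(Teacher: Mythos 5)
Your proposal is correct and follows essentially the same route as the paper: a change of variable to exploit rationality of the $\a_i$, the Gauss multiplication formula to rewrite the integrand as a product of $\Gamma(z+b_{ij})$, identification of the resulting Mellin--Barnes integral with $G^{N,0}_{0,N}$ at argument $x^L/\prod_i k_i^{k_i}$, and the exponential asymptotic from Luke with the same parameter $\theta$. Your bookkeeping of $\sum_{i,j} c_{i,j}$, of $NY^{1/N}=C_1x^{C_2}$, and of $Y^{\theta}\asymp x^{C_2C_3}$ matches the paper's computation.
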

	
	\begin{proof}
		We have from the definition 
		\begin{align}
			\tilde{Z}_{\boldsymbol{\alpha,\beta} }(x)&=\frac{1}{2 \pi i} \int\limits_{(d)} {\prod_{i=1}^q\Gamma\left(\alpha_i s+\bar{\beta_i} \right){x^{ - s}}ds},
		\end{align}
		where $d>0$. The change of variable $s=Lz$ gives 
		\begin{align}
			\tilde{Z}_{\boldsymbol{\alpha,\beta} }(x)&=\frac{L}{2 \pi i} \int\limits_{\left( d/L\right) } {\prod_{i=1}^q\Gamma\left(k_iz+\bar{\beta_i} \right){x^{ -z L}}dz}. 
		\end{align}
		Using the Gauss multiplication formula, we have 
		\begin{align}
			\Gamma\left(k_i\left(z+{\bar{\beta_i}\o k_i} \right)\right)=(2\pi)^{1-k_i \o 2}k_i^{k_i\left(z+{\bar{\beta_i}\o k_i} \right)-{1\o 2}}\prod_{j=1}^{k_i}\Gamma\left(z+{\bar{\beta_i}\o k_i} +{j-1\o k_i}\right).
		\end{align}
		Thus, $\tilde{Z}_{\boldsymbol{\alpha,\beta} }(x)$ equals
		\begin{align}
			\frac{L}{2 \pi i}(2\pi)^{{q \o 2}-{1\o 2}\sum_{i=1}^{q}k_i}\left( \prod_{i=1}^qk_i^{{\bar{\beta_i}} -{1\o 2}}\right)  \int\limits_{\left( d/ L\right) } \left({x^{  L}\o \prod_{i=1}^qk_i^{k_i} }\right)^{-z} 			{\prod_{i=1}^q\prod_{j=1}^{k_i}\Gamma\left(z+{\bar{\beta_i}\o k_i} +{j-1\o k_i}\right)dz}.
		\end{align}
		The above expression agrees with  that for a Meijer $G$-function. In particular, since $d/L>0$, the line of integration does separate the poles as prescribed in \eqref{meijer G}. Putting 
		\begin{align}\label{defbij}
			b_{ij}={\bar{\beta_i}\o k_i} +{j-1\o k_i}, \quad  \ 1\leq j\leq k_i, \  1\leq i\leq q,
		\end{align}
		and $w={x^{  L}\o \prod_{i=1}^qk_i^{k_i} }$, we find that 
	\begin{align}
			\tilde{Z}_{\boldsymbol{\alpha,\beta} }(x)&=
			{L}(2\pi)^{{q \o 2}-{1\o 2}\sum_{i=1}^{q}k_i}\left( \prod_{i=1}^qk_i^{{\bar{\beta_i}} -{1\o 2}}\right)  \frac{1}{2 \pi i}\int\limits_{\left( d/  L\right) } w^{-z} 			{\prod_{i=1}^q\prod_{j=1}^{k_i}\Gamma\left(z+b_{ij}\right)dz}\\
			&= {L}(2\pi)^{{q \o 2}-{1\o 2}\sum_{i=1}^{q}k_i}\left( \prod_{i=1}^qk_i^{{\bar{\beta_i}} -{1\o 2}}\right) 
			G_{0 \ , \ \ \sum k_i}^{ \sum k_i, \ 0}\left(\begin{matrix}
				&\rule{0.5cm}{0.05cm} \\ & b_{i,j}
			\end{matrix} \ \ \bigg| w  \right)  .\label{ztilde in G}
		\end{align}
		We now use an asymptotic relation for the Meijer $G$-function from \cite[p. 180]{luke}. One can compute that in our case, $\s =\sum_{k=1}^{q}k_i$ and $\theta={1\o \s}\left(\sum_{i=1}^{q}\bar{\beta_i}+{1-q \o 2}\right)$, where $\s$ and $\theta$ are defined as per the notation on p. 180 of \cite{luke}.  We get 
		\begin{align}
			\tilde{Z}_{\boldsymbol{\alpha,\beta} }(x)&\ll_F	G_{0 \ , \ \ \sum k_i}^{ \sum k_i, \ 0}\left(\begin{matrix}
				&\rule{0.5cm}{0.05cm} \\ & b_{i,j}
			\end{matrix} \ \ \bigg| w  \right)\\
			&\ll_F \frac{(2\pi)^{\sum k_i-1 \o 2}}{\sqrt{\sum k_i}} \exp\left( -\sum k_i w^{1/ \sum k_i}\right) w^\theta 
			\label{estimate decay Z}
			\\
			&\ll_F  \exp\left( -\sum k_i \left({x^{  L}\o \prod_{i=1}^qk_i^{k_i} }\right)^{1/\sum k_i}\right) x^{  L {\left(\sum_{i=1}^{q}\bar{\beta_i}+{1-q \o 2}\right)}/\sum k_i}. 
		\end{align}
		Observing that 
		\begin{align}
\sum_{i=1}^q k_i = \sum_{i=1}^q \alpha_i L = {Ld_F \o 2  }, 
		\end{align}		
		we complete the proof.
	\end{proof}
	
	\begin{lemma} \label{lem:Z tilda dash bd}
		Let $\boldsymbol{\a}=(\a_1,\cdots, \a_q), \ \boldsymbol{\b}=(\b_1,\cdots, \b_q),$ with  $\a_i>0$ and $\Re(\b_i)>0$ for all $i$. Assume that each component of  $\boldsymbol{\a}$ is rational, say $\a_i={m_i\o n_i}$ with $(m_i, n_i)=1$. Let $L$ be the lcm $[n_1, \cdots , n_q]$. Denoting the positive integer $\a_i L$ by $k_i$, we define  $C_i$'s as in \eqref{Ci}. Then as $x\to \infty $,  we have 
		\begin{align}
			\tilde{Z}'_{\boldsymbol{\alpha,\beta} }\left( { x}\right) \ll  \exp(-C_1x^{C_2}) x^{C_2C_3+C_4},
		\end{align}
		where $C_4=\max\{0, C_2-1\}$.
	\end{lemma}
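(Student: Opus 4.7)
The plan is to differentiate under the integral sign and then reduce the estimate to Lemma~\ref{lem:Z tilda bd} by an algebraic identity that converts the extra factor of $s$ produced by differentiation into a shift of one of the Gamma parameters. First, I will verify that differentiation under the integral is legitimate: along the line $\Re(s)=d$ with $1<d<1+c_F$, Stirling's formula (Lemma~\ref{lemmastir}) gives
\[
\Bigl|\prod_{i=1}^{q}\Gamma(\alpha_i s + \bar{\beta_i})\Bigr| \;\ll_F\; |\Im s|^{A}\exp\Bigl(-\tfrac{\pi}{2}\sum_{i=1}^{q}\alpha_i |\Im s|\Bigr)
\]
for some constant $A$, so both the integrand and its $x$-derivative are absolutely integrable, and we obtain
\[
\tilde{Z}'_{\boldsymbol{\alpha,\beta}}(x) \;=\; -\frac{1}{2\pi i\,x}\int_{(d)} s \prod_{i=1}^{q} \Gamma(\alpha_i s + \bar{\beta_i})\, x^{-s}\, ds.
\]

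Next, the algebraic identity $s = \alpha_1^{-1}\bigl[(\alpha_1 s + \bar{\beta_1}) - \bar{\beta_1}\bigr]$, combined with the functional equation $(\alpha_1 s + \bar{\beta_1})\Gamma(\alpha_1 s + \bar{\beta_1}) = \Gamma(\alpha_1 s + \bar{\beta_1}+1)$, lets me rewrite
\[
s\prod_{i=1}^{q}\Gamma(\alpha_i s + \bar{\beta_i}) \;=\; \frac{1}{\alpha_1}\Bigl[\Gamma(\alpha_1 s+\bar{\beta_1}+1)\prod_{i=2}^{q}\Gamma(\alpha_i s + \bar{\beta_i}) \;-\; \bar{\beta_1}\prod_{i=1}^{q}\Gamma(\alpha_i s + \bar{\beta_i})\Bigr].
\]
Substituting and recognizing each resulting contour integral as a $\tilde{Z}$-function (the shifted $\bar{\beta_1}+1$ corresponds to the $\beta$-vector $\boldsymbol{\beta}^{+} := (\beta_1+1,\beta_2,\ldots,\beta_q)$, since $\overline{\beta_1+1}=\bar{\beta_1}+1$) gives
\[
\tilde{Z}'_{\boldsymbol{\alpha,\beta}}(x) \;=\; -\frac{1}{\alpha_1\, x}\Bigl[\tilde{Z}_{\boldsymbol{\alpha},\,\boldsymbol{\beta}^{+}}(x)\;-\; \bar{\beta_1}\,\tilde{Z}_{\boldsymbol{\alpha,\beta}}(x)\Bigr].
\]

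Finally, I apply Lemma~\ref{lem:Z tilda bd} to both terms. The shifted vector $\boldsymbol{\beta}^{+}$ still has every component of positive real part and $\boldsymbol{\alpha}$ is unchanged, so the constants $C_1,C_2$ (which depend only on $\boldsymbol{\alpha}$ and $d_F$) are unaffected, while $C_3$ is replaced by $C_3+1$. Hence
\[
\tilde{Z}'_{\boldsymbol{\alpha,\beta}}(x) \;\ll_F\; x^{-1}\exp(-C_1 x^{C_2})\Bigl[x^{C_2(C_3+1)} + x^{C_2 C_3}\Bigr] \;\ll_F\; \exp(-C_1 x^{C_2})\, x^{C_2 C_3 + C_2 - 1}
\]
as $x\to\infty$, since $C_2=2/d_F>0$ forces the first summand to dominate. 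Because $C_2-1\le\max\{0,C_2-1\}=C_4$, we may replace the exponent $C_2-1$ by $C_4$ (this is an equality when $C_2\ge 1$ and a mild overshoot, absorbed into the implied constant, when $C_2<1$), yielding the stated bound. I do not anticipate a substantive obstacle: all the heavy lifting via Gauss multiplication and the Meijer-$G$ asymptotic from Luke is already packaged inside Lemma~\ref{lem:Z tilda bd}, and the only point requiring care is the bookkeeping that the shifted parameters still meet its hypotheses with the same $\boldsymbol{\alpha}$-dependent constants.
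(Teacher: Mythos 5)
Your proof is correct and is essentially the paper's argument: the paper also writes $\tilde{Z}'_{\boldsymbol{\alpha,\beta}}$ as $\tfrac{1}{x}$ times a difference of two $\tilde{Z}$-type quantities, one with its first parameter shifted up by one, and then applies Lemma \ref{lem:Z tilda bd} (resp.\ the asymptotic \eqref{estimate decay Z} with $\theta$ increased by $1/\sum k_i$) to each piece, arriving at the same exponents $C_2C_3$ and $C_2(C_3+1)$ before absorbing $C_2-1$ into $C_4$. The only difference is that the paper obtains this decomposition by quoting Luke's derivative identity (12) for the Meijer $G$-function after the Gauss-multiplication step, whereas you derive the equivalent recurrence directly from the Mellin--Barnes integral via $s\,\Gamma(\alpha_1 s+\bar{\beta_1})=\alpha_1^{-1}\bigl(\Gamma(\alpha_1 s+\bar{\beta_1}+1)-\bar{\beta_1}\Gamma(\alpha_1 s+\bar{\beta_1})\bigr)$ -- a self-contained and equally valid route.
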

	\begin{proof}
		We will use the identity (12) from Luke \cite[p. 151]{luke}, which gives after some simplification    
		\begin{align}\label{derivative z tilde}
			{d\o dz}\left(G_{0 \ , \ \ \sum k_i}^{ \sum k_i, \ 0}\left(\begin{matrix}
				&\rule{0.5cm}{0.05cm} \\ & b_{i,j}
			\end{matrix} \ \ \bigg| z  \right)\right)
		=
		{b_{11}\o z}G_{0 \ , \ \ \sum k_i}^{ \sum k_i, \ 0}\left(\begin{matrix}
				&\rule{0.5cm}{0.05cm} \\ & b_{i,j}
			\end{matrix} \ \ \bigg| z  \right)-{1\o z}G_{0 \ , \ \ \sum k_i}^{ \sum k_i, \ 0}\left(\begin{matrix}
				&\rule{0.5cm}{0.05cm} \\ & b_{11}+1, b^{*}_{i,j}
			\end{matrix} \ \ \bigg| z  \right),
		\end{align}
		where the $b_{ij}$'s  are as defined in \eqref{defbij} and $b^{*}_{ij}$ denotes the same sequence as $b_{ij}$ without the first term $b_{11}$. From \eqref{ztilde in G}, letting $z={x^{  L}\o \prod_{i=1}^qk_i^{k_i} }$ and $L_1=  {L}(2\pi)^{{q \o 2}-{1\o 2}\sum_{i=1}^{q}k_i}\left( \prod_{i=1}^qk_i^{{\bar{\beta_i}} -{1\o 2}}\right)$, we have 
		\begin{align}
			\tilde{Z}'_{\boldsymbol{\alpha,\beta} }(x)&=L_1
			{d\o dz}\left(G_{0 \ , \ \ \sum k_i}^{ \sum k_i, \ 0}\left(\begin{matrix}
				&\rule{0.5cm}{0.05cm} \\ & b_{i,j}
			\end{matrix} \ \ \bigg| z  \right)\right){dz\o dx} 
			\\
			&=
		{{Lx^{  L-1}\o \prod_{i=1}^qk_i^{k_i} }}  {b_{11}\o z}L_1  G_{0 \ , \ \ \sum k_i}^{ \sum k_i, \ 0}\left(\begin{matrix}
			&\rule{0.5cm}{0.05cm} \\ & b_{i,j}
		\end{matrix} \ \ \bigg| z  \right)-  
	{{Lx^{  L-1}\o \prod_{i=1}^qk_i^{k_i} }}  
	{1\o z} 	L_1  G_{0 \ , \ \ \sum k_i}^{ \sum k_i, \ 0}\left(\begin{matrix}
			&\rule{0.5cm}{0.05cm} \\ & b_{11}+1, b^{*}_{i,j}
		\end{matrix} \ \ \bigg| z  \right), 
		\end{align} 
	using	 \eqref{derivative z tilde}.  For the first term above, we use \eqref{ztilde in G} followed by  Lemma \ref{lem:Z tilda bd}. For the second term,  following the notation of \cite[p. 180]{luke},   the parameter  $\sigma = \sum k_i $ while  $\theta$ takes the value
	${1\o \sum k_i}\left(\sum_{i=1}^{q}\bar{\beta_i}+1+{1-q \o 2}\right)$. We then apply the estimate \eqref{estimate decay Z}. 
We have thus obtained 
		\begin{align}
			\tilde{Z}'_{\boldsymbol{\alpha,\beta} }(x)&\ll_F\left({\bar{\b_1}\o k_1}\exp \big( {-C_1x^{C_2}} \big) x^{C_2C_3}
			+{L\o x} \exp \big( {-C_1x^{C_2}} \big) x^{C_2(C_3+1)}\right)\\
			&\ll_F \exp \big( {-C_1x^{C_2}} \big) x^{C_2C_3+C_4}.
		\end{align} 
	\end{proof}

	\section{Ramanujan-Hardy-Littlewood-type identity for Selberg class}\label{mr}
	Before giving the proof of Theorem \ref{rhlselbergth}, we prove the following results.
	We will first derive an approximate formula for ${F'\over F }(s)$ in terms of zeros $\rho$ of $F$ which are near $s$. This is a generalization of Theorem 9.6(A) of \cite{titch}. 
	\begin{lemma}
		Let $F$ satisfy axioms (i)-(iv) of the class $\mathcal{S}$. If $\rho =\beta +i\gamma$ runs through zeros of $F(s)$, 
		\begin{align}\label{lemma1}
			\frac{F' (s)}{F (s)} ={ \sum_{|t-\gamma| \leq 1} \frac{1}{s-\rho}}+{O_F}(\log t).
		\end{align}
		uniformly for $\sigma\in [-c_F,1+c_F]$, where $s=\s+it$ and  $c_F$ is as defined in \eqref{defcF}.
	\end{lemma}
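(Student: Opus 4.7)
The plan is to follow the classical Titchmarsh argument (Theorem 9.6(A) of \cite{titch}) for $\zeta(s)$, adapted to the multi-Gamma factor setting of the Selberg class. First I introduce the completed $L$-function $\Xi_F(s) := (s(s-1))^{k_F}\Phi(s)$, where $\Phi(s)$ is as in \eqref{Functional equation simplified}. The factor $(s(s-1))^{k_F}$ removes both the pole of $\Phi(s)$ at $s=1$ (coming from $F(s)$) and the matching pole at $s=0$ forced by the functional equation, so $\Xi_F$ is entire. By axiom (ii) combined with Stirling's bounds on the Gamma factors, $\Xi_F$ is of finite order (in fact of order $1$) and has zeros precisely at the non-trivial zeros $\rho$ of $F(s)$. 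Hadamard's factorization theorem therefore gives
\begin{equation*}
\Xi_F(s) = e^{A_F + B_F s}\prod_{\rho}\left(1-\frac{s}{\rho}\right)e^{s/\rho}
\end{equation*}
for constants $A_F, B_F$ depending on $F$. Taking logarithmic derivatives on both sides and using the explicit form of $\Xi_F$ produces
\begin{equation*}
\frac{F'(s)}{F(s)} = B_F - \frac{k_F}{s} - \frac{k_F}{s-1} - \log Q - \sum_{i=1}^q\alpha_i \frac{\Gamma'}{\Gamma}(\alpha_i s + \beta_i) + \sum_{\rho}\left(\frac{1}{s-\rho} + \frac{1}{\rho}\right).
\end{equation*}

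Next I evaluate this identity at the auxiliary point $s_0 := 2+it$ and subtract, so that $B_F$, $\log Q$, and the Weierstrass terms $1/\rho$ all cancel. The resulting non-zero contributions are $O_F(1)$ uniformly for $\sigma\in[-c_F,1+c_F]$ and $|t|\ge 1$: the pole terms $1/s, 1/(s-1), 1/s_0, 1/(s_0-1)$ are each $O(1)$; Stirling's formula (Lemma \ref{lemmastir}) gives $\Gamma'(z)/\Gamma(z) = \log z + O(1/|z|)$, and since $\alpha_i s+\beta_i$ and $\alpha_i s_0+\beta_i$ share the same imaginary part of size $\asymp \alpha_i t$, their log-difference is $O(1)$; and $F'(s_0)/F(s_0) = O_F(1)$ because $\Re(s_0)=2$ lies in the region of absolute convergence of the Dirichlet series for $F'/F$ inherited from the Euler product (axiom (iv)). This reduces the problem to estimating $\sum_\rho\bigl[1/(s-\rho) - 1/(s_0-\rho)\bigr]$.

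For the zero sum I split according to whether $|\gamma-t|\le 1$ or $|\gamma-t|>1$. Among the close zeros, $|s_0-\rho|\ge 2-\beta\ge 1$, so each term $1/(s_0-\rho)$ is $O(1)$, and by the density estimate \eqref{dens} there are only $O(\log t)$ such zeros; these contribute at most $O_F(\log t)$, leaving exactly $\sum_{|\gamma-t|\le 1}1/(s-\rho)$ on the right-hand side. For the far zeros I write
\begin{equation*}
\frac{1}{s-\rho}-\frac{1}{s_0-\rho} = \frac{s_0-s}{(s-\rho)(s_0-\rho)},
\end{equation*}
use $|s_0-s|\ll 1$ together with $|s-\rho|,|s_0-\rho|\ge |\gamma-t|$ to bound each summand by $O(1/|\gamma-t|^2)$, and group the zeros into annuli $n<|\gamma-t|\le n+1$. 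Applying \eqref{dens} in each annulus gives
\begin{equation*}
\sum_{|\gamma-t|>1}\frac{1}{|\gamma-t|^2}\ll \sum_{n\ge 1}\frac{\log(|t|+n)}{n^2}\ll \log t,
\end{equation*}
completing the estimate. The main obstacle is orchestrating the cancellation between the Weierstrass factor contributions at $s$ and $s_0$ so that the remaining zero sum can be controlled by the density bound \eqref{dens}; the adaptation to $F\in\mathcal{S}$ requires only that the telescoped digamma factors remain $O_F(1)$ uniformly on the strip, which is precisely what Stirling's formula supplies.
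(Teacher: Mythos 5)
Your proof is correct, but it takes a genuinely different route from the paper. You run the classical global argument: form the completed function $\Xi_F(s)=(s(s-1))^{k_F}\Phi(s)$, establish that it is entire of order one, take the logarithmic derivative of its Hadamard product, subtract the same identity at $s_0=2+it$, and control the resulting zero sum with the density estimate \eqref{dens} by splitting into $|\gamma-t|\le 1$ and annuli $n<|\gamma-t|\le n+1$. The paper instead avoids the Hadamard product entirely: it applies Lemma $(\alpha)$ of Titchmarsh (a local Borel--Carath\'eodory-type lemma) to $f=F$ on a disc of radius $4(2+c_F)$ centred at $2+it$, importing the required growth hypothesis $|F(s)/F(2+it)|<e^{A\log t}$ from Dixit--Murty, and then converts the sum over zeros in the disc to the sum over $|\gamma-t|\le 1$ using \eqref{dens}. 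Your approach buys the explicit partial-fraction formula for $F'/F$ (useful elsewhere) at the cost of needing the full analytic setup of $\Xi_F$ --- in particular, the order-one statement and the genus-one Weierstrass product require the functional equation plus a Phragm\'en--Lindel\"of argument, and the factor $s^{k_F}$ must be checked to cancel exactly the pole of the $\gamma$-factor at $s=0$ against the zero of $F$ of order $j_F-k_F$ there, so that no Weierstrass factor at $\rho=0$ appears; all of this is standard for the Selberg class (Conrey--Ghosh, Perelli) but deserves a citation. The paper's local route sidesteps that machinery, needing only an upper bound on $|F|$ in a disc and the trivial lower bound at $\Re(s)=2$ coming from the Euler product. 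Both arguments are uniform on $\sigma\in[-c_F,1+c_F]$ and deliver the same error term $O_F(\log t)$.
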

	\begin{proof}
		Let $s=\sigma +it$ with  $t>2$ and $\sigma\in  [-c_F,1+c_F]$. We will use Lemma $(\alpha)$ on p. 56 of \cite{titch} with $f(s)=F(s)$, $s_0=2+it$ and $r=4(2+c_F)$. Then from lemma 3.4 (in particular (5) and (9)) of \cite{Dixit and murty}, it is clear that the hypothesis 
		\begin{align}
			\left|{f(s)\over f(s_0)}\right|<e^M
		\end{align}
		holds with $M=A\log t$ for some constant $A$ depending on $F$. Lemma $(\alpha)$ of \cite[p. 56]{titch} then yields 
		\begin{align}\label{approx1}
			{F'(s)\over F(s)}=\sum_{ \rho :|\rho-s_0|\leq 4+2c_F}{1\over s-\rho}+O_F(\log t)
		\end{align}
		for $|\sigma -2|\leq {r\over 4}$, and so in particular for $-c_F\leq \sigma \leq 1+c_F$ due to the choice of $r$. We would like to replace the main term in \eqref{approx1} by 
		\begin{align}\label{approx2}
			\sum_{ \rho :|\gamma-t|\leq 1}{1\over s-\rho},
		\end{align}
		where $\gamma$ refers to the ordinate of the zero $\rho$. Indeed, the sum in \eqref{approx1} runs over more number of terms than in \eqref{approx2}, but since $|\rho-s_0|\leq 4+2c_F$ implies $|\gamma -t|\leq 4+2c_F$, this difference is atmost $$N_F(t+4+2c_F)-N_F(t-4-2c_F)\ll_F\log T,$$ by \eqref{dens}. This completes the proof. 
	\end{proof}

	\begin{lemma}\label{lemma}
			Let $F$ satisfy axioms (i)-(iv) of the class $\mathcal{S}$.  Suppose that the degree of $F$ is $d_F$. Let  $A_1>0$ be a sufficiently small constant.	Let $T\rightarrow \infty$ through values such that 
		$|T-\gamma|>\exp{ \left(-A_1 \gamma / \log\gamma \right)}$ 
		for every ordinate $\gamma $ of a zero of $F(s)$.
		Then for $\sigma \in [-c_F,1+c_F ]$, 
		where $c_F$ is as defined in \eqref{defcF}, we have  
		\begin{equation}
			|F(\sigma + iT)| \geq e^{-A_2 T},	
		\end{equation}
		where $0<A_2 < {\pi d_F\over4}$.
	\end{lemma}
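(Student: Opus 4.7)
The strategy is to split the strip $-c_F \le \sigma \le 1+c_F$ across the critical line and treat the two halves separately: establish the bound first on the right half $\sigma \in [1/2, 1+c_F]$ by integrating the logarithmic derivative $F'/F$ along horizontal segments (using the previous lemma's approximation formula), and then transfer it to the left half via the functional equation.

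For $\sigma \in [1/2, 1+c_F]$, I would fix $\sigma_0 > 1$ sufficiently large so that the Euler product (axiom (iv)) yields $|F(\sigma_0 + iT)| \ge c_0 > 0$ uniformly in $T$, and use
$$\log|F(\sigma+iT)| = \log|F(\sigma_0+iT)| - \int_\sigma^{\sigma_0}\Re\frac{F'}{F}(u+iT)\,du.$$
Substituting the approximation $\frac{F'}{F}(u+iT) = \sum_{|T-\gamma|\le 1}\frac{1}{u+iT-\rho} + O_F(\log T)$ from the previous lemma, the crucial observation is that $\Re\frac{1}{u+iT-\rho} = \frac{u-\beta}{(u-\beta)^2+(T-\gamma)^2}$ integrates elementarily to $\frac{1}{2}\log\bigl((u-\beta)^2+(T-\gamma)^2\bigr)$. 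Each zero thus contributes a log-ratio whose numerator is $O(1)$ and whose denominator is at least $|T-\gamma|^2 \ge \exp(-2A_1 T/\log T)$ by the avoidance hypothesis, yielding a per-zero cost of $O(A_1 T/\log T)$. Multiplying by the $O(\log T)$ zeros with $|T-\gamma|\le 1$ using \eqref{dens} gives a total $O(A_1 T) + O(\log T)$, so that $|F(\sigma+iT)| \ge c \cdot e^{-A_2 T}$ with $A_2$ proportional to $A_1$, hence small.

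For $\sigma \in [-c_F, 1/2]$, I would apply the functional equation $\Phi(s) = \omega\bar\Phi(1-s)$ to derive the modulus identity
$$|F(\sigma+iT)| = \frac{|\gamma_F(1-\sigma+iT)|}{|\gamma_F(\sigma+iT)|}\,|F(1-\sigma+iT)|,$$
which reduces matters to the already-handled range since $1-\sigma \in [1/2, 1+c_F]$. Applying Stirling's formula factor by factor, the exponential decays $\exp(-\pi\alpha_i T/2)$ in numerator and denominator cancel, leaving a polynomial ratio of order $T^{d_F(1-2\sigma)/2}$, which is at least $1$ on this range and therefore does not degrade the exponential bound just obtained. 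The restriction $A_2 < \pi d_F/4$ in the stated conclusion arises as the intrinsic scale controlling the gamma-factor exponentials, providing the buffer under which the argument goes through even under conservative Stirling estimates of the ratio.

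The principal technical hurdle lies in step one: a naive termwise estimate of $1/(u+iT-\rho)$ using only the avoidance bound $|T-\gamma|\ge \exp(-A_1 T/\log T)$ yields $\exp(A_1 T/\log T)$ per term, which is hopelessly large when integrated directly and would not suffice to give any exponential lower bound. The remedy is to take real parts \emph{before} integrating, converting each singular term into a logarithm, so that summing the resulting mild contributions over the $O(\log T)$ nearby zeros recovers the sharp $O(A_1 T)$ total. The freedom to choose $A_1 > 0$ arbitrarily small (permitted by the hypothesis on $T$) then accommodates any prescribed $A_2$ within the admissible range.
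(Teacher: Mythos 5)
Your proof is correct and its engine --- integrating the approximation $\frac{F'}{F}(s)=\sum_{|t-\gamma|\le 1}(s-\rho)^{-1}+O_F(\log t)$ horizontally so that each zero contributes a term $\log|s-\rho|\ge\log|T-\gamma|\ge -A_1\gamma/\log\gamma$, then summing over the $O(\log T)$ nearby zeros via \eqref{dens} to get a total loss of $O(A_1T)$ --- is exactly the paper's. The one structural difference is your split of the strip at $\Re(s)=1/2$ with a functional-equation reflection for the left half. That detour is sound (the exponential parts of the Stirling asymptotics for $\gamma_F(1-\sigma+iT)$ and $\gamma_F(\sigma+iT)$ cancel, leaving a factor of order $T^{d_F(1-2\sigma)/2}\ge 1$ for $\sigma\le 1/2$), but it is unnecessary: the approximation formula of the preceding lemma is established uniformly for $\sigma\in[-c_F,1+c_F]$, so the paper simply integrates from $1+c_F+iT$ down to $\sigma'+iT$ across the entire strip, bounding the boundary term via the absolutely convergent Dirichlet series at $1+c_F$. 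Two small remarks. First, your anchoring at a large $\sigma_0>1$ where the Euler product forces $|F(\sigma_0+iT)|\ge c_0$ is, if anything, more careful than the paper's anchoring at $1+c_F$ (where a lower bound on $|F|$ is also tacitly needed), though you should then verify that the $F'/F$ approximation, or at least boundedness of $F'/F$, covers the intermediate range $[1+c_F,\sigma_0]$; this follows from the same Borel--Carath\'eodory argument with a larger radius. Second, the constraint $A_2<\pi d_F/4$ does not originate in the gamma factors within this lemma's proof, as you suggest; it is the threshold required by the later contour-shifting application, where the horizontal integrands are $O(e^{(A_2-\pi d_F/4)|T|})$, and it is achievable simply because $A_2$ is proportional to $A_1$, which may be chosen arbitrarily small.
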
	
	\begin{proof}
		Our starting point for the proof is \eqref{lemma1}, which holds uniformly for $s=\sigma +it$ with $\sigma \in [-c_F, 1+c_F]$.
		Integrating \eqref{lemma1} with respect to $s$ from ${1+c_F} +it$ to $z=\sigma' + it$, where $\sigma' \in [-c_F,{1+c_F}]$, and $t$ does not equal the ordinate of any zero  of $F(s)$, we get
		\begin{align}\label{eq7}
			\log F (z) - \log F \left({{1+c_F}} +it \right)&=\bigg[{\sum_{|t-\gamma| \leq 1} \log (s-\rho) } \bigg]_{{{1+c_F}} +it} ^{{z}} +{O}_F(\log t)  \\
			&={\sum_{|t-\gamma| \leq 1} \log (z-\rho) }-{\sum_{|t-\gamma| \leq 1} \log \left( {{1+c_F}}+it-\rho\right)  } +{O}_F(\log t).
		\end{align}
		Since  $\log \left({1+c_F}+it-\rho\right) $ is bounded when $|t-\gamma|\leq1$, and the number of terms in the second sum above is $\ll \log  t$  from \eqref{dens}, we obtain
		\begin{equation}
			{\displaystyle\sum_{|t-\gamma| \leq 1} \log \left({1+c_F}+it-\rho \right) }={O}_F(\log t).	
		\end{equation} 
		Moreover, 
			\begin{equation}
				| F\left({1+c_F}+it\right)|\leq \sum_{n=1}^{\infty}\frac{|a(n)|}{n^{1+c_F}}, 
			\end{equation} 
		which is bounded due to absolute convergence of $F(s)$ to the right of $\Re (s)=1$. 
		Consequently, \eqref{eq7} yields
		\begin{align}
			\log F (s) ={\sum_{|t-\gamma| \leq 1} \log (s-\rho) }+{O}_F(\log t).
		\end{align}
		Since Re$(s-\rho)$ is bounded, taking real parts on both sides, we obtain
		\begin{align}
			\log |F (s)| &={\sum_{|t-\gamma| \leq 1} \log |s-\rho| }+{O}_F(\log t)\\
			&\geq {\sum_{|t-\gamma| \leq 1} \log |t-\gamma| }+{O}_F(\log t).
		\end{align}
		Let $A_1$ be a sufficiently small constant to be chosen later. We now let $s=\sigma + iT$, where $T\to \infty$ through values such that $|T-\gamma | > \exp\left(-A_1\gamma/ \log \gamma \right)$ for every ordinate $\gamma$ of a zero of $F(s)$. Then the above inequality yields 
		\begin{align}\label{lower1}
			\log |F (\sigma + iT)| \geq -{\sum_{|T-\gamma| \leq 1}{ A_1 \gamma \over \log\gamma }}
			+{O}_F(\log T).
		\end{align}
		From \eqref{dens}, we see that
		\begin{align}\label{lower2}
			{\sum_{|T-\gamma| \leq 1} {A_1 \gamma \over \log\gamma }} \leq \sum_{|T-\gamma| \leq 1} A_1 \frac{T+1}{\log(T-1)} \leq C_1A_1T,
		\end{align}
		for some absolute constant $C_1>0$. 
		Equations \eqref{lower1} and \eqref{lower2} imply that
		\begin{align} \label{log }
			\log |F (\sigma + iT)| \geq -C_2A_1T,
		\end{align}
	for some absolute constant $C_2>0$.
		Choosing $A_1$ sufficiently small so that $C_2A_1< {\pi d_F\over4}$ completes the proof.
	\end{proof}

	\subsection{Proof of Theorem \ref{rhlselbergth}}
	From \eqref{defZ}, we have
	\begin{align}
		\sum\limits_{n = 1}^\infty  {\frac{{b_F(n)}}{n}{Z_{\boldsymbol{\a,\b}}}\left( {\frac{\eta }{n}} \right)}  &= \sum\limits_{n = 1}^\infty  {\frac{{b_F(n)}}{n}\frac{1}{{2\pi i}}\int\limits_{(c)} {\prod_{i=1}^q\Gamma\left(\alpha_i s+\bar{\beta_i} \right){{\left( {\frac{\eta }{n}} \right)}^{ - s}}ds} }    \\	
		& = \frac{1}{2\pi i}  \int\limits_{(c)} \prod_{i=1}^q\Gamma\left(\alpha_i s+\bar{\beta_i} \right)\sum\limits_{n = 1}^\infty\frac{b_F(n)}{n^{1-s}}{ {\eta }}^{ - s}ds   \\   	
		&   = \frac{1}{{2\pi i}}\int\limits_{\left( {c} \right)} {\frac{\prod_{i=1}^q\Gamma\left(\alpha_i s+\bar{\beta_i} \right)}{{F(1 - s)}}{\eta ^{ - s}}ds}.   
	\end{align}
	In the second step above, we have interchanged the order of integration and summation by using Stirling's formula \eqref{strivert}.
	Using the functional equation \eqref{Functional equation}, we obtain
	\begin{align}\label{xr2}
		\sum\limits_{n = 1}^\infty  {\frac{{b_F(n)}}{n}{Z_{\boldsymbol{\a,\b}}}\left( {\frac{\eta }{n}} \right)}  &= \frac{\bar{\omega}}{{2\pi i}}\int\limits_{\left( {c} \right)} Q^{1-2s}\frac{\prod_{i=1}^q\Gamma\left(\alpha_i(1-s)+{\beta_i} \right)}{\overline{{F(\bar{s})}}}{\eta ^{ - s}}ds.	
	\end{align} 
	\begin{figure}
		\centering
       	\includegraphics[scale=0.22,bb=100 100 1000 1000]{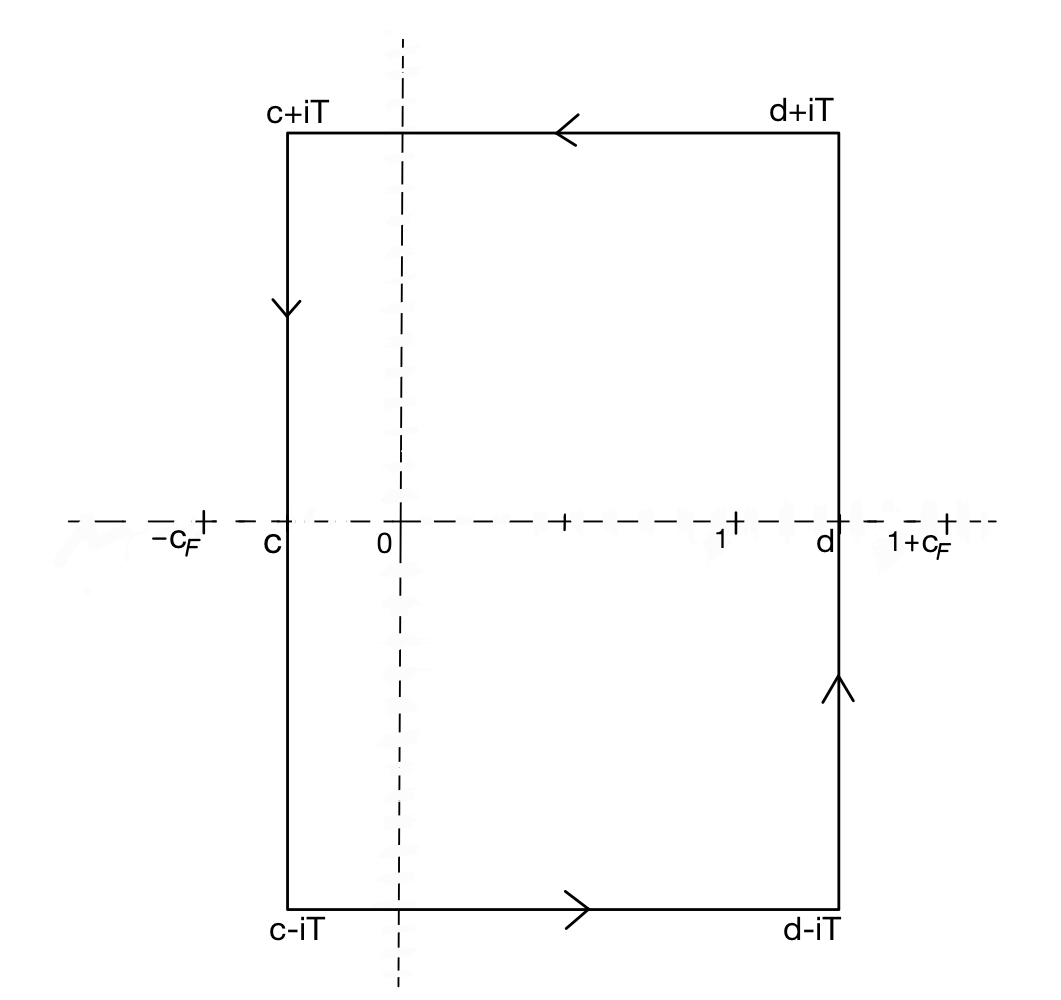}
		\vspace{0.7cm}
			\caption{ The contour $\mathscr{C}$}\label{fig:Contour}
	\end{figure}
We first assume that $r=j_F-k_F>0$. As seen in Figure 1 below, we consider the rectangular contour $\mathscr{C}$ with sides $\left[c -iT, d-iT \right],  \left[d-iT, d + iT \right], \left[d + iT, c+iT \right]$  and $ \left[c+iT, c-iT \right] $, where $T>0$ and $1<d<1+c_F $. From the choice of $c_F$ as given in \eqref{defcF} and the discussion preceding it, it is clear that the only possible poles of the $\Gamma$-factors inside $\mathscr{C}$ are at $s=1$, from those factors for which $\b_i=0$. As there are $j_F$ such factors and $F$ has a pole of order $k_F$ at $s=1$, this yields a pole of order $j_F-k_F$ at $s=1$ for the integrand in \eqref{xr2}. 
	Again from the discussion preceding \eqref{defcF}, $F(s)$	has a zero of order $j_F-k_F$ at $s=0$, thereby giving a pole of same order for our integrand at $s=0$.
	
	Finally, we also have simple poles at conjugates of non-trivial zeros of $F(s)$. Applying Cauchy's residue theorem gives 
	\begin{align}
		&\frac{1}{{2\pi i}}\left[\hspace{5pt} {\int\limits_{ c - iT}^{d - iT} {+\int\limits_{d - iT}^{d + iT} {+\int\limits_{d + iT}^{ c + iT} {+\int\limits_{ c + iT}^{ c - iT} {} } } } } \hspace{5pt}\right]  \bar{\omega} Q^{1-2s}\frac{\prod_{i=1}^q\Gamma\left(\alpha_i(1-s)+{\beta_i} \right)}{\overline{{F(\bar{s})}}}{\eta ^{ - s}}ds  \\
		& \quad =\sum\limits_\rho   {\bar{\omega}} Q^{1-2\bar{\rho}}\frac{\prod_{i=1}^q\Gamma\left(\alpha_i(1-\bar{\rho})+{\beta_i} \right)}{\overline{{F'({\rho})}}}{\eta ^{ - \bar{\rho}}} + {\left. {\frac{1}{{(r - 1)!}}\frac{{{d^{r - 1}}}}{{d{s^{r - 1}}}}{{s}^r} \bar{\omega} Q^{1-2s}\frac{\prod_{i=1}^q\Gamma\left(\alpha_i(1-s)+{\beta_i} \right)}{\overline{{F(\bar{s})}}}{\eta ^{ - s}}} \right|_{s = 0}}   \\
		&  \qquad+ {\left. {\frac{1}{{(r - 1)!}}\frac{{{d^{r - 1}}}}{{d{s^{r - 1}}}}{{\left( {s - 1} \right)}^r} \bar{\omega} Q^{1-2s}\frac{\prod_{i=1}^q\Gamma\left(\alpha_i(1-s)+{\beta_i} \right)}{\overline{{F(\bar{s})}}}{\eta ^{ - s}}} \right|_{s = 1}}.\label{xr3}
	\end{align}  	
	By Lemma \ref{lemma}, Stirling's formula \eqref{strivert} and the definition of $d_F$, as $|T|\to \infty$, 
	\begin{align}
		Q^{1-2s}\frac{\prod_{i=1}^q\Gamma\left(\alpha_i(1-s)+{\beta_i} \right)}{\overline{{F(\bar{s})}}}{\eta ^{ - s}}=O\left(e^{\left( A_2- {\pi d_F\over4}\right)|T| } \right). 
	\end{align}  
	Since $A_2 < {\pi d_F \over 4}$, we conclude that the integrals along the horizontal line segments in \eqref{xr3} go to zero as $|T|\to \infty$. 
	
	Derivation of the second integral on the left-hand side of \eqref{xr3} is as follows. Using $\eta \nu ={1\over Q^2}$, 
	\begin{align}
		\frac{1}{{2\pi i}}	\int\limits_{d - iT}^{d + iT} \bar{\omega} Q^{1-2s}\frac{\prod_{i=1}^q\Gamma\left(\alpha_i(1-s)+{\beta_i} \right)}{\overline{{F(\bar{s})}}}{\eta ^{ - s}}ds&=\frac{1}{{2\pi i}} \int\limits_{d - iT}^{d + iT} \bar{\omega} Q\frac{\prod_{i=1}^q\Gamma\left(\alpha_i(1-s)+{\beta_i} \right)}{\overline{{F(\bar{s})}}}{\nu ^s}ds.
	\end{align}
	Putting $s=1-u$ and $c'=1-d$, we obtain 
	\begin{align}
		\frac{1}{{2\pi i}}	\int\limits_{d - iT}^{d + iT} \bar{\omega} Q\frac{\prod_{i=1}^q\Gamma\left(\alpha_i(1-s)+{\beta_i} \right)}{\overline{{F(\bar{s})}}}{\nu ^s}ds&=-\frac{\bar{\omega} Q}{{2\pi i}}	\int\limits_{c' + iT}^{c' - iT} \frac{\prod_{i=1}^q\Gamma\left(\alpha_iu+{\beta_i} \right)}{\overline{{F(1-\bar{u})}}}{\nu ^{1-u}} du  \\ 
		&=\frac{\bar{\omega} Q}{{2\pi i}}	\int\limits_{c' - iT}^{c' + iT} {\prod_{i=1}^q\Gamma\left(\alpha_iu+{\beta_i} \right)}\sum_{n=1}^{\infty}{\overline{b_F(n)}\over n^{1-u}}{\nu ^{1-u}} du  \\ 
		&=\sum_{n=1}^{\infty}{\overline{b_F(n)}\over n}\frac{\bar{\omega} Q\nu}{{2\pi i}}	\int\limits_{c' - iT}^{c' + iT} {\prod_{i=1}^q\Gamma\left(\alpha_iu+{\beta_i} \right)}{\left(\nu \over n \right) ^{-u}} du,
	\end{align}
	where the second equality is valid as $\Re(1-\bar{u})=d>1$. As $T\to \infty$, since $-c_F<c'<0$,  using \eqref{defZ}, we obtain
	\begin{align}\label{xr4}
		\frac{1}{{2\pi i}}	\int\limits_{(d)} \bar{\omega} Q^{1-2s}\frac{\prod_{i=1}^q\Gamma\left(\alpha_i(1-s)+{\beta_i} \right)}{\overline{{F(\bar{s})}}}{\eta ^{ - s}}ds=\bar{\omega} Q\nu\sum_{n=1}^{\infty}{\overline{b_F(n)}\over n}Z_{\boldsymbol{\alpha,\bar{\beta}}}	\left(\nu \over n \right).
	\end{align}
	Letting $T\to \infty$ in \eqref{xr3} and using  \eqref{xr2}  and \eqref{xr4}, we get 
	\begin{align}
		&-\sum\limits_{n = 1}^\infty  {\frac{{b_F(n)}}{n}{Z_{\boldsymbol{\alpha,\beta}}}\left( {\frac{\eta }{n}} \right)} +\bar{\omega} Q\nu\sum_{n=1}^{\infty}{\overline{b_F(n)}\over n}Z_{\boldsymbol{\alpha,\bar{\beta}}}	\left(\nu \over n \right) \\
		& \quad =\sum\limits_\rho   {\bar{\omega}} Q^{1-2\bar{\rho}}\frac{\prod_{i=1}^q\Gamma\left(\alpha_i(1-\bar{\rho})+{\beta_i} \right)}{\overline{{F'({\rho})}}}{\eta ^{ - \bar{\rho}}} + {\left. {\frac{1}{{(r - 1)!}}\frac{{{d^{r - 1}}}}{{d{s^{r - 1}}}}{{s}^r} \bar{\omega} Q^{1-2s}\frac{\prod_{i=1}^q\Gamma\left(\alpha_i(1-s)+{\beta_i} \right)}{\overline{{F(\bar{s})}}}{\eta ^{ - s}}} \right|_{s = 0}}   \\
		&  \qquad+ {\left. {\frac{1}{{(r - 1)!}}\frac{{{d^{r - 1}}}}{{d{s^{r - 1}}}}{{\left( {s - 1} \right)}^r} \bar{\omega} Q^{1-2s}\frac{\prod_{i=1}^q\Gamma\left(\alpha_i(1-s)+{\beta_i} \right)}{\overline{{F(\bar{s})}}}{\eta ^{ - s}}} \right|_{s = 1}}.
	\end{align}
	Multiplying both sides by $-\omega \sqrt{\eta} $ and using the fact $\eta \nu={1\over Q^2}$, we get the required result \eqref{rhlselbergeq}. 
	
	For $r\leq 0$, it is easy to see that last two terms on the right-hand side of \eqref{xr3} do not appear, but the remainder of the proof remains unchanged.
	This completes the proof.

		\section{Proofs of Corollaries  \ref{coro1} and \ref{coro2}}
\label{sec:cor}
In this section, we prove some consequences of Theorem \ref{rhlselbergth}.   
\subsection{Proof of Corollary \ref{coro1}.}
		We will use the notation set up in the statement of  Lemma \ref{lem:Z tilda bd}.  From the data of $F$, we have $q=1$, $\bs \a=(4)$, $\bs \b = (\b_1)$,  hence  $L=1$ and $k_1=4$.  The sequence $b_{ij}$ of \eqref{defbij} is given by  
	$
		 b_{1,j} =( \bar{\b_1} + j-1 )/ 4$  with $ j =1, \hdots, 4.  
	$
We  	express  ${Z}_{{(4),(\beta_1)} }(x)$ in terms of  $\tilde{Z}_{{(4),(\beta_1)} }(x)$ using \eqref{Z vs Z tilda} and then write $\tilde{Z}_{{(4),(\beta_1)} }(x)$ in terms of the Meijer $G$-function using \eqref{ztilde in G} to obtain 
			\begin{align}
		{Z}_{{(4),(\beta_1)} }(x) 
			&= \begin{cases}
				(2\pi)^{-{3\over 2}}
				\frac{4^{\bar{\b}_1}} {2} 
				G^{4 \  0}_{0 \ 4}\Bigg( \left. \begin{matrix}
					\rule{.4cm}{.2mm} \\ {\bar{\b_1}\over 4} \ {\bar{\b_1}\over 4}+{1\over 4} \  {\bar{\b_1}\over 4}+{2\over 4}\ {\bar{\b_1}\over 4}+{3\over 4}
				\end{matrix} \right| {x \over 4^4} \Bigg)& \text{if } \bar{\b}_1 \neq 0\\
				{(2\pi)^{-{3\o 2}} } {1  \o 2}G^{4 \  0}_{0 \ 4}\Bigg( \left. \begin{matrix}
					\rule{.4cm}{.2mm} \\ 0 \ {1\over 4} \  {2\over 4}\ {3\over 4}
				\end{matrix} \right| {x \over 4^4} \Bigg)-{1\o 4}& \text{if } \bar{\b}_1 = 0.
			\end{cases}
		\end{align}
For this special  value of the Meijer G-function,  \eqref{sepG1} gives 
		\begin{align}
		{Z}_{{(4),(\beta_1)} }(x)=
			\begin{cases}
				{1\o 4}x^{\bar{\b}_1/ 4}e^{- {x}^{1/4}  }& \text{if } \b_1 \neq 0\\
				{1\o 4}e^{-x^{1/4}  }-{1\o 4} & \text{if } \b_1 = 0.
			\end{cases}
		\end{align}
	Substituting these values into Theorem \ref{rhlselbergth}, we obtain \eqref{coro1eqn} and \eqref{coro2eqn} respectively as desired.
\qed
	
\subsection{Proof of Corollary \ref{coro2}.}	Again, following the notation of Lemma \ref{lem:Z tilda bd}, in this case we have $q=4$, $L=1$ and $k_i=1 $ for $1\le i \le 4$.  Calculating the residue term of  \eqref{Z vs Z tilda}, it can be seen that 
\begin{align} \label{Z to Z tilda}
	{Z}_{{(1,1,1,1),(0,{1\o 2},-C,C)} }\left(x\right) &= \tilde{Z}_{(1,1,1,1),(0,{1\o 2},-C,C) }\left(x \right)-  \Gamma(1/ 2) \Gamma(-C)  \Gamma(C). 
\end{align} 
We note that the   sequence $b_{ij}$ of \eqref{defbij} is now given by  
$
\left\lbrace b_{i,1} \right\rbrace_{i=1}^4  = \{0, 1/2, -C, C \}.  
$
Using \ref{ztilde in G}, we have 
\begin{align} \label{Z tilda to G}
\tilde{Z }_{(1,1,1,1),(0,{1 \o 2},-C,C) }\left(x \right) = 	G^{4 \  0}_{0 \ 4}\left( \left. \begin{matrix}
	\rule{.4cm}{.2mm} \\ 0 \ {1\over 2} \  -C\ C
\end{matrix} \right| x \right). 
\end{align}
Putting $b=0$ and $c=C$  in \eqref{sepG2}, we get
		\begin{align} \label{G to K}
			G^{4 \  0}_{0 \ 4}\left( \left. \begin{matrix}
				\rule{.4cm}{.2mm} \\ 0 \ {1\over 2} \  -C\ C
			\end{matrix} \right| z \right)&=8{\sqrt{\pi}} K_{2C}(2\sqrt{2}(-z)^{1/4} )K_{2C}\left({2\sqrt{2}\sqrt{z}\o (-z)^{1/4}}\right),
		\end{align}
Using the reflection formula to simplify the residue term in  \eqref{Z to Z tilda} and combining it with \eqref{Z tilda to G} and \eqref{G to K}, we obtain 
	\begin{align}
			{Z}_{{(1,1,1,1),(0,{1\o 2},-C,C)} }\left(x \right) 
			&=8{\sqrt{\pi}} K_{2C}(2\sqrt{2} ({-x})^{1/4} )K_{2C}\left({2\sqrt{2}\sqrt{x}\o ({-x})^{1/4}}\right)+{\pi\sqrt{\pi}\o C\sin \pi C}. 
		\end{align}
	Substituting this into Theorem \ref{rhlselbergth} yields \eqref{coro4eqn}.

	\section{Riesz-type criteria for Selberg class}

	\subsection{Proof of Theorem \ref{Rhlselberg}}	
	For $F\in \mathcal{S^*}$, consider the summatory function of the coefficients of $F(s)^{-1}$, given by 
	\begin{align}
		M_F(x):=\sum_{n\leq x } b_F(n) .
	\end{align}
It can be checked that $F\in \mathcal{S^*}$ is contained in the general class of $L$-functions constructed on p.94 of Iwaniec-Kowalski \cite{Iwanic}. In particular, axioms (iv)$'$
 and the inequality \eqref{inequality} play a crucial role here. We may thus apply Proposition 5.14 of \cite{Iwanic} to obtain that GRH implies the bound
	\begin{align}\label{estmf}
		M_F(x)\ll_{\d} x^{{1\over 2}+\d}, 
	\end{align}
	for any $\d >0$, as $x\to \infty$. Let us define 
	\begin{align} \label{MFhn}
		M_F(h, N)=\sum_{n=h}^{N}{b_F(n)\over n}.
	\end{align}
	Applying partial summation to \eqref{estmf}, one can deduce that as $h\to \infty$, we have under GRH 
	\begin{align} \label{bd on MFhn}
		M_F(h, n)\ll_\d h^{-{1\over 2}+\d}, 
	\end{align}
for any $\d>0$, uniformly for any $N\geq h$. If $N<h$, we interpret $M_F (h,N)$ to be zero.
 We now commence the proof of Theorem \ref{Rhlselberg}.
 \subsubsection{Proof of Theorem \ref{Rhlselberg} (a)}
	Let $\e>0$ be sufficiently small. From \eqref{defP}, we may write  
	\begin{align}
		\mathcal{P} _{\boldsymbol{\alpha,\beta} ,z}(\nu^2)&=\sum\limits_{n = 1}^\infty  \frac{{b_F(n)}}{n}{Z_{\bs{ \a,\b}  }}\left(\left( {\frac{\nu }{n}} \right)^{d_F}\right)\cosh\left({\nu z}\over n\right)\\
		&=\left[ \sum\limits_{n = 1}^{h-1}+\sum\limits_{n = h}^{\infty} \right]  \frac{{b_F(n)}}{n}{Z_{\bs{ \a,\b}  }}\left(\left( {\frac{\nu }{n}} \right)^{d_F}\right)\cosh\left({\nu z}\over n\right)\\
		&=: P_1+P_2, \text{(say)}\label{P1+P2}
	\end{align}
where $h$ equals the greatest integer less than or equal to $\nu^{1-\e}$, that is,  $h =[\nu ^{1-\e}]$. We first consider the partial sum of $P_2$, given by 
	\begin{align}
P_2(N) &:=		\sum\limits_{n = h}^{N}\frac{{b_F(n)}}{n}{Z_{\boldsymbol{\alpha,\beta} }}\left( \left( {\frac{\nu }{n}}  \right)^{d_F} \right) \cosh \left(\frac{\nu z}{n} \right)
\\
&= \sum_{ n=h }^N\left( M_{F}(h,n)-M_F(h,n-1)\right) {Z_{\boldsymbol{\alpha,\beta} }} \left(  \left( {\frac{\nu }{n}}  \right)^{d_F}\right) \cosh \left(\frac{\nu z}{n} \right)
\\
		&=\sum_{ n=h }^{N-1} M_F(h,n) \left( f(n)-f(n+1)   \right) + M_{F}{(h,N)}f(N),  
		\label{A}
	\end{align}
	where 
	\begin{equation}
f(t) := Z_{\boldsymbol{\alpha,\beta} }\left(  \left( {\frac{\nu }{t}}  \right)^{d_F}\right) \cosh \left(\frac{\nu z}{t} \right). 
	\end{equation}
	Using Lemma \ref{lem:Z poly bound} and the bounds \eqref{bd on MFhn} and $\cosh(xz) \ll_z 1$ as $x \to 0$, we find that 
	\begin{equation}
		\label{bd for M times f}
M_F(h,N) f(N) \ll_{\d, F,z}  h^{-\frac{1}{2}+\d} \left( \frac{N}{\nu} \right)^{cd_F}, 
\label{B}
	\end{equation}
	where $-c_F<c <0$. Letting $N \to \infty$, the above expression goes to zero, so that  \eqref{A} yields
	\begin{align}
P_2	&=\sum_{ n=h }^{\infty } M_F(h,n) \left( f(n)-f(n+1)   \right)
\label{P2}
\\
&= \left[\sum_{ n=h }^{[C\nu] }+ \sum_{ n=[C\nu] }^{\infty }  \right] M_F(h,n) \left(f(n)- f(n+1)\right)  
\\
&=: P_3+P_4  \quad \text{ (say)}. 
\label{P3,P4}
	\end{align}
	Here, $C$ is a constant to be chosen later and $[C\nu]$ denotes the greatest integer less than or equal to $C\nu$. 
	Using the Mean Value theorem, there exists  $\lambda_n \in (n, n+1)$ such that 
	\begin{align} \label{MVT}
f(n) - f(n+1) &= f'(\lambda_n)  
\\
&= 
-\frac{d_F \nu^{d_F}}{\lambda_n^{d_F+1}} Z'_{\boldsymbol{\alpha,\beta} }\left(  \left( {\frac{\nu }{\lambda_n}}  \right)^{d_F}\right) \cosh \left(\frac{\nu z}{\lambda_n} \right) 
-\frac{\nu z}{\lambda_n^{2}} Z_{\boldsymbol{\alpha,\beta} }\left(  \left( {\frac{\nu }{\lambda_n}}  \right)^{d_F}\right) \sinh \left(\frac{\nu z}{\lambda_n} \right). 
	\end{align}
	We  will estimate $f(n)-f(n+1)$ in two different ranges of $n$, as relevant for $P_3$ and $P_4$ in what follows. 
	
	At this point, the assumption that $j_F=0$  plays a crucial role. Indeed, when $j_F=0$, one sees from \eqref{Z vs Z tilda} that $Z_{\boldsymbol{\alpha,\beta} }$ is the same as $\tilde Z_{\boldsymbol{\alpha,\beta} }$. Thus, for 
	 $\nu/\lambda_n$ sufficiently large, we can apply Lemmas \ref{lem:Z tilda bd} and \ref{lem:Z tilda dash bd}. Since $\lambda_n \asymp n$, this means that in the range $n \le [C\nu]$, for  sufficiently small  $C$,  we have 
	\begin{align}
f(n)-f(n+1) \ll_{F,z,C} \exp\left(-C_1 \left( \frac{\nu}{n} \right)^2 + \frac{\nu |\Re(z)| }{n} \right) \left(\frac{\nu}{n}\right)^{2C_3+d_FC_4} 
\left(  \frac{\nu^{d_F}}{n^{d_F+1}} + \frac{\nu}{n^2} \right), 
\label{f(n)1}
	\end{align}
where $C_i$'s are as defined in \eqref{Ci}. 
Here, we have used the fact that	$d_FC_2=2$ and that  $\cosh(xz)$ and $\sinh(xz)$ are trivially of the order $\exp(x \Re(z))$. Using the above estimate and \eqref{bd on MFhn} in \eqref{P3,P4}, we obtain 
\begin{align}
P_3\ll_{\d, F,z, C} \sum_{n=h}^{[C\nu]} \frac{h^{-{1 \o 2} +\d}}{n}\exp\left(-C_1 \left( \frac{\nu}{n} \right)^2 + \frac{\nu \left|\Re(z)\right| }{n} \right) 
\left(\frac{\nu}{n}\right)^{2C_3+d_FC_4 + d_F}. 
\label{P31}
\end{align}
	Keeping in mind that $C_1>0$, it can be  seen that the argument of $\exp$ is negative if $n < C_1\nu / |\Re z|$. In the  range of $n$ applicable to $P_3$, it is enough to ensure that 
	\begin{align}
C < \frac{C_1}{|\Re z|}. 
\label{cond on C}
	\end{align}
	We choose $C$ to be sufficiently small, say 
$
		C= {C_1}/({MC_1+|\Re z|}), 
$
	where $M>0$ is a sufficiently large fixed constant, so that \eqref{f(n)1} and \eqref{cond on C} are both satisfied. Writing $2C_3+d_FC_4+d_F$ as $C_5$, \eqref{P31} then gives
\begin{align}
P_3 &\ll_{\d, F, z, C} h^{-{1\o 2} +\d} \sum_{ n=h}^{[C\nu]} \frac{\nu ^{C_5}}{n^{C_5+1}} 
\ll_{\d, F, z, C} h^{-{1\o 2} +\d} \nu ^{C_5}  \int_{h}^{C\nu} \frac{1}{t^{C_5+1}} dt. 
\label{int test}
\end{align}
Thus,  we obtain 
\begin{align}
P_3\ll_{\d, F, z, C}  h^{-{1\o 2} +\d} \nu ^{C_5} h^{-C_5}
\ll_{\d, F, z, C} h^{-{1\o 2} +\d}, 
\label{bd for P3}
\end{align}
for any $\delta>0$, keeping in mind that $h$ was chosen to be $[\nu^{1-\e}]$. 

We now turn to $P_4$. Since the range of $n$ is now $n > [C\nu]$, we will use Lemmas \ref{lem:Z poly bound} and \ref{lem:Z' bd} to bound 
$Z_{\boldsymbol{\alpha,\beta} }$ and $Z'_{\boldsymbol{\alpha,\beta} }$ respectively. Moreover, we have $\cosh(xz) \ll_z 1$ and $\sinh(xz) \ll_z
x$ when $x>0$ is bounded above by some absolute constant. Applying all this to \eqref{MVT}  gives for $n > [C\nu]$, 
\begin{align}\label{f(n)2}
f(n) - f(n+1) &\ll_{F,z,C} \frac{\nu^{d_F}}{n^{d_F+1}} \left( \frac{\nu}{n} \right)^{(-c-1)d_F} + \frac{\nu}{n^2}\left( \frac{\nu}{n} \right)^{-cd_F+1} 
\\
&\ll_{F,z,C} \frac{1}{n} \left( \frac{\nu}{n} \right)^{-cd_F+2}. 
\end{align}
Again, from \eqref{P3,P4}, we  have  
\begin{align}
	\label{bd for P4}
P_4 &\ll_{\e, F,z,C}h^{-{1 \o 2} +\e} \sum_{ n=h }^\infty \frac{\nu^{-cd_F+2}  }{n^{-cd_F+3}}
\ll_{\d, F,z,C} h^{-{1\o 2} +\d},
\end{align}
 for any $\d >0$, in the same manner as done for $P_3$ in  \eqref{int test}. 
Combining \eqref{P3,P4}, \eqref{bd for P3} and \eqref{bd for P4},  we  obtain  that for any $\d>0$, 
\begin{equation}
P_2 \ll_{\d, F, z, C} h^{-{1\o2}+\d}.
\label{bd for P2}
\end{equation}

 We now consider the first series of \eqref{P1+P2}. Again, since $j_F=0$, we have $Z_{\boldsymbol{\alpha,\beta} } = \tilde Z_{\boldsymbol{\alpha,\beta} }$. Moreover, we have assumed $F\in \mathcal S^*$, which means that Lemma  \ref{lem:Z tilda bd} can be applied. This bound gives 
 \begin{align}
P_1 \ll   \sum\limits_{n = 1}^{h-1}{|b_F(n)| \over n}\exp \left(-C_1\left(\nu\over n\right)^{2} + \frac{\nu|\Re z| }{n} \right)\left(\nu\over n\right)^{2C_3}. 
 \end{align}
Here we have used the trivial bound  $\cosh (\nu z/n) \ll \exp( \nu |\Re z|/n )$.  Since $b_F$ is the inverse of $a_F$ under Dirichlet convolution, one can derive  the following relation for any prime $p$ and $m \in \mathbb N$, 
\begin{align}
b_F(p^m) = - \sum_{j=1}^{m} a_F(p^j) b_F(p^{m-j}). 
\end{align}
Using the Ramanujan bound (axiom (v)) for $a_F(n)$ and induction on $m$, it follows that  axiom (v) is also true for $b_F$ on  prime powers. As $b_F(n)$ is multiplicative, this gives $b_F(n) \ll_\d n^\d$ for any $\d >0$.  
Hence we may write
	\begin{align}
		P_1 
	& \ll \sum\limits_{n = 1}^{h-1}\left(\nu\over n\right)^{2C_3}\exp \left(-{\nu\over n}\left( C_1{\nu\over n} -{|\Re z|}\right)\right).
	\end{align}

We choose the parameter $\nu$ to satisfy  $\nu^{\e/2} >{|\Re z|\over C_1}$.  
Then for  $n$ in the range specified by the above sum and $  h =  [\nu^{1-\e}]$,  we have  $C_1{\nu\over n} -{|\Re z|}\geq C_1\left( {\nu\over h} -{\nu^{\e/2}}\right) \gg C_1\nu^\e$. Hence one has 
\begin{align}
	P_1&\ll\sum\limits_{n = 1}^{h-1} \exp \left(-{\nu\over h} C_1{\nu^\e}\right)\left(\nu\over n\right)^{2C_3}
\ll \sum\limits_{n = 1}^{h-1} \exp \left(- C_1{\nu^{2\e}}\right)\left(\nu\over n\right)^{2C_3} 
\ll \exp \left(-c{\nu^{2\e}}\right),
\label{bd for P1}
\end{align} 
for some constant $c>0$. 
Combining \eqref{bd for P2}, \eqref{bd for P1}, 
replacing $\nu^2$ by $y$, and letting  $y\to \infty$, we obtain the required bound for $\mathcal{P} _{\boldsymbol{\alpha,\beta} ,z}(y)$.
	  \\
	  
 \subsubsection{Proof of Theorem \ref{Rhlselberg} (b)}
Whenever $j_F\neq 0$,  $	{Z_{\bs{ \a,\b}  }}$ and $	{\tilde Z_{\bs{ \a,\b}  }}$ differ by  the residue term given in \eqref{Z vs Z tilda}. We first write $\mathcal{P}_{\bs {\a,\b},0}(\nu^2) = P_1+P_2$  as in  \eqref{P1+P2}. 
   Using \eqref{P2} and \eqref{MVT}, keeping in mind that  $z=0$, we can  write  
\begin{align}
P_2= \sum_{n=h}^{\infty}   -\frac{d_F\nu^{d_F}}{\lambda_n^{d_F+1}} M_F(h, n) 	{Z'_{\bs{ \a,\b}  }}\left( \left( \nu \o \lambda_n \right)^{d_F} \right). 
\end{align}
The primary departure here from the proof of part (a) given above is that due to the non-trivial residue term in \eqref{Z vs Z tilda}, we no longer have an  exponential decay bound  for $Z'_{\bs{ \a,\b}  }$. We resort to Lemma \ref{lem:Z' bd} instead. This along with \eqref{bd on MFhn} (which is true because of GRH) yields  
\begin{align}
P_2 \ll_{\d, F} h^{-{1\o 2}+\d}  \sum_{ n=h}^\infty  \frac{\nu^{d_F}}{n^{d_F+1}} \left( \frac{\nu}{n} \right)^{(-c-1)d_F}  \ll_{\d, F} h^{-{1\o 2}+\d}  \sum_{ n=h}^\infty   \frac{\nu^{-cd_F}}{n^{-cd_F+1}}. 
\end{align}
Using the integral test as done in \eqref{int test}, we find that 
$P_2 \ll_{\d, F} h^{-{1\o 2}+\d}, 
$
for any $\d >0$. 

 Turning to $P_1$, we have 
  \begin{align}
  	P_1&=\sum\limits_{n = 1}^{h-1} \frac{{b_F(n)}}{n}{Z_{\bs{ \a,\b}  }}\left(\left( {\frac{\nu }{n}} \right)^{d_F}\right)\\
  	&=\sum\limits_{n = 1}^{h-1} \frac{{b_F(n)}}{n}{\tilde{Z}_{\bs{ \a,\b}  }}\left(\left( {\frac{\nu }{n}} \right)^{d_F}\right)-\sum\limits_{n = 1}^{h-1} \frac{{b_F(n)}}{n}\Res_{s=0}\prod_{i=1}^q\Gamma\left(\alpha_i s+\bar{\beta_i} \right){\left( {\frac{\nu }{n}} \right)^{-d_Fs}}.\label{P1 in 1.4b}
  \end{align}
  The first sum of \eqref{P1 in 1.4b} satisfies the bound \eqref{bd for P1} as done in the proof of part (a), and is thus negligible compared to $P_2$.  Note that the assumption $F\in \mathcal S^*$ is  used  here. Using  $h =[\nu^{1-\e} ]$ and replacing $\nu^2$ by $y$, the  second term becomes 
  \begin{align}
  	\sum_{n=1}^{[y ^{{1\over 2}-\epsilon}] -1}{b_F(n) \over n}\Res\limits_{s=0}\prod_{i=1}^q\Gamma\left(\alpha_i s+\bar{\beta_i} \right)\left({\sqrt{y}\o n} \right)^{-d_Fs}. 
  \end{align}
 Hence GRH implies the estimate \eqref{est 1.4b}.

\subsubsection{Proof of Theorem \ref{Rhlselberg} (c)}
	
	As $z\to 0$, only the $t=0$ term survives in \eqref{lemma2}. Thus, for $0 < \Re(s)<1/2$, we have 
	\begin{align}\label{lemmaresult}
		{F(2s+1)} \int_{0}^{\infty}y^{-s-1}\mathcal{P} _{\boldsymbol{\alpha,\beta} ,0}(y)dy={2\o d_F }{\prod_{i=1}^{q}\Gamma\left( -{2\a_is\o d_F}+\bar \b_i  \right) }.
	\end{align}
We will try to extend this identity to a larger region of the complex plane. The right hand side has no poles in the region $\Re(s) \le 0$ except for a pole of order $j_F$ at $s=0$.  Since $F(2s+1)$ has a pole of order $k_F$ at $s=0$, let us multiply both  sides by $s^\ell$ where  $\ell=\max[k_F,j_F]$.  This gives 
	\begin{align} \label{identity ext}
s^\ell	{F(2s+1)} \int_{0}^{\infty}y^{-s-1}\mathcal{P} _{\boldsymbol{\alpha,\beta} ,0}(y)dy= {2 s^\ell \o d_F }{\prod_{i=1}^{q}\Gamma\left( -{2\a_is\o d_F}+\bar \b_i  \right) }.
\end{align}
We next  partition the above integral as 
\begin{align}
I:=  	\int_{0}^{\infty}y^{-s-1}\mathcal{P} _{\boldsymbol{\alpha,\beta} ,0}(y)dy=\int_{0}^{1}y^{-s-1}\mathcal{P} _{\boldsymbol{\alpha,\beta} ,0}(y)dy+\int_{1}^{\infty}y^{-s-1}\mathcal{P} _{\boldsymbol{\alpha,\beta} ,0}(y)dy =: I_1+I_2.
\end{align}

The assumed bound $\mathcal{P}_{\boldsymbol{\alpha,\beta},0 }(y) \ll_{F, \d} y^{-{1\o 4} + \d}$, implies that $I_2$ is integrable in the region $-{1\o4} < \Re(s)$.  In order to analyze $I_1$, we apply   \eqref{defP} and the  bound of Lemma \ref{lem:Z poly bound} to $Z_{\boldsymbol{\alpha,\beta}}\left( (\sqrt y/ n)^{d_F} \right)$. It can then be seen that $I_1$ is integrable for $\Re(s) < -cd_F/2$, where $-c_F<c<0$.  It thus follows that $I$ is analytic for $\Re(s) \in (-1/4, \epsilon)$ for $\e>0$ sufficiently small. 
As both sides of the identity \eqref{identity ext} are  analytic in this region, by the  principle of analytic continuation,  \eqref{identity ext} holds for an extended region of $\mathbb C$, in particular for  $-{1\over4}<\Re(s)\le0$.
	
	
 Since the right hand side of \eqref{identity ext} does not vanish in the region $-{1\o 4} <\Re s<0$ and the integral $I$ is analytic here, we see that  $F(2s+1)$ does not vanish in this region.  This implies the non-vanishing of  $F(s)$ in  ${1\over 2}<\Re(s)<1$ as needed.

\subsubsection{Proof of Theorem \ref{finite zero}}   
Let $z\ne 0 $, $\arg z \ne \pm {\pi/2}$ be  fixed. 	Let $s=\sigma {+iT}$, and $\bar{\b_i}=\delta_i+i\gamma_i$ for  $i=1, \hdots, q$. From  \eqref{lemma2}, we have for $0< \Re s <1/2$, 
\begin{align}
s^{k_F}	\label{F times phi}
	&F(2s+1) \phi(s,\bs \a, \bs \b, z )	={2 s^{k_F} \o d_F}\sum_{t=0}^{\i}{z^{2t}\over(2t)!}{\prod_{i=1}^q\Gamma\left({2\alpha_i\o d_F} (t-s)+\bar{\beta_i} \right)}, 
\end{align}	
where $  \phi(s,\bs \a, \bs \b, z )	=   \int_{0}^{\infty}y^{-s-1}\mathcal{P} _{\boldsymbol{\alpha,\beta} ,z}(y)dy$,  and $k_F$ is the order of the pole of $F(s)$ at $s=1$.  We will first extend this identity to the region $-1/4< \Re (s)\le 0$. 
Using Stirling's formula \eqref{strivert}, as  $|T|\to \infty$, the product of Gamma factors above can be written as 
\begin{align}	
	\label{after stirling}
	 (2\pi)^{q/2} \left( 1+O_{F} \left( \frac{1}{ |T|}\right) \right) \prod_{i=1}^q\bigg( \bigg|{2\alpha_iT\o d_F}-\gamma_i \bigg|^{{2\alpha_i(t-\sigma)\o d_F}+\delta_i-{1\over 2}} e^{-{\pi\over 2}\big| {2\alpha_iT\o d_F}-\gamma_i \big|}\bigg). 
\end{align}	
In particular, using \eqref{after stirling}, it is easy to check that the radius of convergence of the power series on the right hand side of \eqref{F times phi} is infinite. Using the assumed bound $\mathcal P_{\bs \a, \bs \b, z} (y)  \ll_{F, \d} y^{- {1\o 4} + \d}$, the integral $ \phi(s,\bs \a, \bs \b, z )	$  can be  shown to be analytic in the region $\Re(s) \in (-1/4,\e)$ for $\e>0$ sufficiently small, exactly as done in the proof  of Theorem \ref{Rhlselberg} (c) above. 
Moroever, the factor $s^{k_F} F(2s+1)$ is entire. From the principle of analytic continuation,  we see  that \eqref{F times phi} holds in $-1/4 < \Re(s)\le 0$.

We now consider $s=\sigma {+iT}$, where $-1/4 <  \sigma  < 0$. 
Plugging \eqref{after stirling} into \eqref{F times phi}, we obtain for $ s^{k_F} F(2s+1)\phi(s, \bs \a, \bs \b, z)$, the main term 
\begin{align}		
&{2 s^{k_F}(2\pi)^{q/2}\o d_F}\prod_{i=1}^q\bigg( \bigg|{2\alpha_iT\o d_F}-\gamma_i \bigg|^{-{2\alpha_i \sigma \o d_F}+\delta_i-{1\over 2}}   e^{-{\pi\over 2} \big|  {2\alpha_iT\o d_F}- \gamma_i \big|}\bigg)
\sum_{t=0}^{\i}{z^{2t}\over(2t)!}\prod_{i=1}^q\left|  {2\alpha_iT\o d_F} - \gamma_i \right|^{2\alpha_i t\o d_F}
\label{MT1}
\\
=	&{2 s^{k_F}(2\pi)^{q/2}\o d_F}\prod_{i=1}^q\bigg( \bigg|{2\alpha_iT\o d_F}-\gamma_i \bigg|^{-{2\alpha_i \sigma \o d_F}+\delta_i-{1\over 2}}   e^{-{\pi\over 2} \big|  {2\alpha_iT\o d_F}- \gamma_i \big|}\bigg)
	\cosh\left(z \prod_{i=1}^q\left|{2\alpha_iT\o d_F}- \gamma_i\right|^{\alpha_i\over  d_F}\right), 
	\label{MT2}
\end{align}
using  the Taylor series for $\cosh(z)$.  The error term involved is  $O_F(1/|T|)$ times the absolute value of \eqref{MT2}. 
Note that if $\arg z = \pm {\pi\o 2}$, then the argument of $\cosh$ above is  purely imaginary, which would mean that the $\cosh$ term is bounded since $\cosh z= \cos(iz)$.  Thus, we see that if $z\ne 0$ and $\arg z \ne \pm {\pi\o 2}$, then as $T\to \infty$, the $\cosh$ term is unbounded.  This means that as $T \to \infty$, the expression \eqref{MT2} tends to infinity in absolute value. Hence there must exist a sufficiently large value $T_{F,z}$ of $T$, such that for $T>T_{F,z}$,  the  expression \eqref{MT2} is non-zero. 
Since $s^{k_F}\phi(s, \bs \a, \bs \b, z)$ is analytic in this region, we must have that $F(2s+1)$ does not vanish for $s=\s+iT$ with $\s \in (-1/4,0)$ and $T> T_{F,z}$. This means that any zeros of $F(s)$ in the critical strip which are  off the  $\Re (s)=1/2$ line must lie in a vertical strip of fixed height.   As the number of such zeros is finite, this proves the result. 


\section{Concluding Remarks}
We make some remarks regarding   potential directions  of  further investigation here. 
Theorem \ref{rhlselbergth} and Corollaries \ref{coro1}, \ref{coro2}, each assume the convergence of a certain series. It is possible that this assumption can be weakened  by assuming convergence under the kind of bracketing given by \eqref{bracketing}. This may be feasible along the lines of the treatment in the  classical case. 

 Indeed, it may even be possible to prove such ``bracketed convergence" as has been done for the Riemann zeta function. This is a non-trivial question, which we have not attempted to resolve here. 

Another natural question is whether some of the hypotheses assumed to prove Riesz-type criteria for our $L$-functions  can be eliminated.  While it is known that the Euler product axiom  and the Ramanujan hypothesis are crucial  in order to consider  GRH (see for instance \cite[pp. 27–28]{perelli}), it is worth asking whether the polynomial Euler product assumption can be dispensed with. One can also enquire whether Ramanujan-Hardy-Littlewood type identities are valid for larger classes of $L$-functions, for instance the Lindel\"of class \cite{lindelof1, lindelof2}.  

In line with Lemmas \ref{lem:Z tilda bd} and \ref{lem:Z tilda dash bd}, it is possible to obtain non-trivial estimates for higher derivatives of $\tilde{Z}_{\bs \a, \bs \b}(x)$, which may find useful applications in related problems. 

\section*{Acknowledgments}
This work was supported by the Shyama Prasad Mukherjee fellowship under Council of Scientific and Industrial Research [SPM-06/1031(0281)/ 2018-EMR-I]; Scheme for Promotion of Academic and Research Collaboration under Ministry of Human Resource Development [SPARC/2018 -2019/P567/SL]; Science and Engineering Research Board-Department of Science \& Technology  [ECR/2018/001566]; and INSPIRE Faculty Award Program under Department of Science and Technology  [DST/INSPIRE/Faculty/Batch-13/2018].


\end{document}